\newtheorem{theorem}{Theorem}[section]
\theoremstyle{definition}
\theoremstyle{remark}
\theoremstyle{plain}
\newtheorem{lemma}[theorem]{Lemma}
\newtheorem{corollary}{Corollary}[section]
\newtheorem{proposition}{Proposition}[section]
\newcommand{\C}{\mathbb{C}}
\newcommand{\Z}{\mathbb{Z}}
\newcommand{\diag}{\text{diag}}
\newcommand{\bq}{/\!\!/}
\begin{document}

\begin{center} \textbf{The classification and curvature of biquotients of the form $Sp(3)\bq Sp(1)^2$}

\bigskip

Jason DeVito, Robert DeYeso III, Michael Ruddy, Philip Wesner \end{center}

\begin{abstract}
We show there are precisely $15$ inhomogeneous biquotients of the form $Sp(3)\bq Sp(1)^2$ and show that at least $8$ of them admit metrics of quasi-positive curvature.

\end{abstract}

\section{Introduction}
\label{intro}

Manifolds which admit metrics of positive sectional curvature have been studied since the inception of Riemannian geometry, but despite this, very few examples are known.  There are many examples of non-negatively curved manifolds, however, leading one to expect there to be obstructions to pass from non-negative curvature to positive curvature.  Unfortunately, the only known obstructions depend on the fundamental group and, in particular, vanish for simply connected manifolds.

As a means to better understand the difference between these two classes of manifolds, attention has turned towards quasi-positively and almost positively curved Riemannian manifolds. A non-negatively curved Riemannian manifold is said to be quasi-positively curved if it has a point for which all $2$-planes have positive sectional curvature.  A Riemannian manifold is called almost positively curved if the set of points for which all planes are positively curved is dense.

The first known example of a quasi-positively curved manifold was an exotic sphere found by Gromoll and Meyer \cite{GrMe1}.  Since then, many new examples of both quasi-positively curved and almost positively curved manifolds have been found by Wilking \cite{Wi}, Petersen and Wilhelm\cite{PW2,W}, Eschenburg and Kerin \cite{EK,Ke1,Ke2}, Kerr and Tapp \cite{KT,Ta1}, and the first author\cite{D1}.

The majority of these previous results are constructed via Riemannian submersions onto biquotients.  A biquotient is any manifold which is diffeomorphic to the quotient of a homogeneous space $G/H$ by an effectively free isometric action of a subgroup $K\subseteq G$.  Alternatively, if $f:U\rightarrow G\times G$ is a homomorphism, then this defines an action of $U$ on $G$ by $f(h)\ast g = (h_1,h_2)\ast g = h_1 gh_2^{-1}$.  When the action is effectively free, the orbit space naturally has the structure of a manifold, denoted $G\bq U$, and is called a biquotient.  When the image of $U$ is a subgroup of $\{e\}\times G$, the action is automatically effectively free and the quotient is the homogeneous space $G/(U/\ker(f))$.

Much is already known about curvature on biquotients of the form $G\bq U$ with $G$ of rank less than $3$, so it is natural to search for examples among rank $3$ groups.  We prove the following:

\begin{theorem}\label{main} There are precisely $15$ inhomogeneous effectively free biquotient actions of $Sp(1)^2$ on $Sp(3)$, giving rise to manifolds distinct up to diffeomorphism.  At least $8$ of these admit metrics of quasi-positive curvature.

\end{theorem}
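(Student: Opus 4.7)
The plan is to first classify effectively free homomorphisms $\rho = (\rho_L, \rho_R) : Sp(1)^2 \to Sp(3) \times Sp(3)$ up to the equivalences that preserve the biquotient diffeomorphism type, and then to construct quasi-positively curved metrics on the promised $8$ examples. Each component $\rho_L$ or $\rho_R$ is a homomorphism $Sp(1)^2 \to Sp(3)$, determined up to conjugacy by its weight multi-set as a quaternionic representation of complex dimension $6$. I would enumerate these by listing the partitions of $6$ into dimensions of $SU(2)$-irreps, subject to the constraint that odd-dimensional (i.e., real type) irreps appear in pairs so that the overall representation is quaternionic. The two factors of $Sp(1)^2$ must act by commuting reps, which after simultaneous diagonalization on the maximal torus reduces to a finite combinatorial problem in bi-weights. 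Two homomorphisms give the same biquotient when they differ by an inner automorphism of $Sp(3) \times Sp(3)$, by an automorphism of $Sp(1)^2$ (including the swap of the two $Sp(1)$ factors), or by the involution on $Sp(3) \times Sp(3)$ modeled on $g \mapsto g^{-1}$ of $Sp(3)$, which exchanges the roles of $\rho_L$ and $\rho_R$.

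For each candidate, one next verifies effective freeness by the standard maximal-torus criterion: $\rho$ is effectively free iff for every $u \in Sp(1)^2$ outside a finite central kernel, no conjugate of $\rho_L(u)$ in $Sp(3)$ equals $\rho_R(u)$. This again becomes a finite weight check. Separately, one excludes the homogeneous cases, in which an equivalent representative has $\rho_L$ or $\rho_R$ trivial modulo the kernel. The remaining actions form the inhomogeneous effectively free list. Confirming the count of $15$ then requires distinguishing the resulting $15$-dimensional biquotients up to diffeomorphism, and this is the main technical obstacle. I would compute the integral cohomology rings, or at least enough characteristic class data, via the Borel fibration $Sp(3) \to Sp(3) \bq Sp(1)^2 \to B(Sp(1)^2)$; Eschenburg's method realizes the rational cohomology as a Koszul-style quotient of $H^*(B(Sp(1)^2)) = \Z[\alpha, \beta]$ (with $\alpha, \beta$ of degree $4$) determined by the characteristic classes pulled back via $\rho_L$ and $\rho_R$. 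Cohomology pairings, torsion, and rational Pontryagin classes should together separate the $15$ types, but the bookkeeping is nontrivial and is where I expect the bulk of the work to lie.

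For the curvature statement, I would exploit the fact that $Sp(3)$ carries natural non-negatively curved metrics whose zero-curvature planes are well understood: the bi-invariant metric and its Cheeger deformations along left- or right-translations by subgroups. For each of the $8$ target biquotients, I would choose a point $p \in Sp(3)$ whose $Sp(1)^2$-orbit lifts a candidate locus of positive curvature and verify the O'Neill / Wilking criterion that every zero-curvature horizontal plane at $p$ is tangent to an action orbit, which one rules out by a direct Lie-algebra computation in $\mathfrak{sp}(3)$. I expect this to succeed precisely when the vertical Lie algebra at $p$ spans a sufficiently ``thick'' subspace of $\mathfrak{sp}(3)$; the $7$ remaining actions presumably correspond to more reducible configurations where this strategy fails, though establishing that they do not admit such metrics is not part of the theorem.
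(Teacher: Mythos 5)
Your outline tracks the paper's strategy closely: classify homomorphisms $Sp(1)^2 \to Sp(3)^2$ via symplectic representation theory and weight data, reduce equivalence to conjugacy of images (Mal'cev), test effective freeness on the maximal torus, distinguish the resulting manifolds via the Borel fibration (torsion in $H^8$ plus $p_1$), and obtain quasi-positive curvature from Cheeger deformations combined with Wilking's doubling trick. The high-level skeleton is the same.

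Two points worth flagging. First, your statement of the curvature criterion is garbled: ``verify \dots that every zero-curvature horizontal plane at $p$ is tangent to an action orbit'' is contradictory, since orbits are vertical and horizontal planes cannot be tangent to them. The correct criterion (the paper's Corollary \ref{eqns}, following Wilking) is to rule out the existence of any two horizontal vectors $X, Y$ at $[p^{-1}]$ that simultaneously span zero-curvature planes for both the left-deformed and right-deformed metrics; one then shows no such pair exists by a Lie-algebra computation. Second, you leave unstated the two steps that make the combinatorics and the curvature argument actually tractable: (i) the paper first classifies effectively free $Sp(1)$-actions on $Sp(3)$ and uses the fact that restrictions of effectively free actions to the coordinate and diagonal circles must again be effectively free, which collapses the $484$ candidate pairs to a short list; and (ii) the specific choice to Cheeger-deform in the direction of $K = Sp(1)\times Sp(2)$, so that zero-curvature planes are controlled by the positive curvature of $\mathbb{H}P^2 = G/K$, of $S^7 = Sp(2)/Sp(1)$, and of the Berger space $Sp(2)/Sp(1)_{\max}$ (the last needed for $N_7$). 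You correctly anticipate that the bookkeeping is where the real work lies, but without these two devices the program you describe would be substantially harder to carry out. Finally, a small conflation: the count of $15$ actions is established by the representation-theoretic classification alone; distinguishing the manifolds up to diffeomorphism is a logically separate claim proved afterward via cohomology and Pontryagin classes, not a prerequisite for the count.
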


More precisely, we find a non-negatively curved metric on $Sp(3)$ which simultaneously induces quasi-positive curvature on the biquotients $N_1$ through $N_8$ in Table \ref{table:emb} on page \pageref{table:emb}.

By way of comparison, there are precisely $4$ homogeneous actions of $Sp(1)^2$ on $Sp(3)$.  Kerr and Tapp \cite{KT,Ta1} have shown that three of the four resulting quotients admit metrics of quasi-positive curvature, and the first author \cite{D1} has shown that one of these three admits a metric of almost positive curvature.  We do not know if our examples are almost positively curved, nor do we know if the remaining $7$ biquotients and $1$ homogeneous space admit metrics of quasi-positive curvature.

The paper is organized as follows:  Section 2 reviews preliminary information about representation theory along with the geometry and topology of biquotients.  Section 3 classifies all effectively free biquotients of $Sp(1)^2$ on $Sp(3)$, see Theorem \ref{biqclass} and Table \ref{table:emb}.  In Section 4 we construct metrics of quasi-positive curvature on the $8$ new examples, see Theorems \ref{n4n5}, \ref{n1n2n3n6}, and \ref{n7n8}.  Finally, in Section 5 we compute the cohomology groups of all the biquotients in Table \ref{table:emb}.  It turns out, the order of the $8$th cohomology group (Table \ref{table:det}) distinguishes most of them, while the remaining biquotients have distinct first Pontryagin classes, see Table \ref{table:Pontryagin}.

We are grateful to the anonymous referee for suggesting several improvements.

\section{Preliminaries}\label{Prelim}

As mentioned in the introduction, given compact Lie groups $U$ and $G$, any homomorphism $f = (f_1,f_2):U\rightarrow G\times G$ gives rise to a natural $U$ action on $G$ given by $ u\ast g = f_1(u)\, g \, f_2(u)^{-1}$.  When the action is effectively free, the orbit space, called a biquotient and denoted $G\bq U$, naturally has the structure of a smooth manifold for which the canonical projection $\pi:G\rightarrow G\bq U$ is a smooth submersion.  Biquotients were systematically studied in Eschenburg's Habilitation \cite{Es2}.

A simple criterion to determine when an action is effectively free is given by the following proposition.

\begin{proposition}\label{free}  Writing $f_i(u) = u_i$, a biquotient action of $U$ on $G$ is effectively free if and only if for any $(u_1,u_2)\in f(U)$, if $u_1$ is conjugate to $u_2$ in $G$, then $u_1 = u_2 \in Z(G)$.  Such an action is free if and only if for any $(u_1,u_2)\in f(U)$, if $u_1$ is conjugate to $u_2$ in $G$, then $u_1 = u_2 = e$.

\end{proposition}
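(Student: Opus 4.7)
The plan is to unwind the action explicitly, compute the stabilizer at an arbitrary point and the ineffective kernel, and then read off both equivalences from these two computations.

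First I would identify stabilizers. For $(u_1,u_2) \in f(U)$, the condition $u\ast g = g$ reads $u_1 g u_2^{-1} = g$, equivalently $u_2 = g^{-1} u_1 g$. Hence $(u_1,u_2)$ lies in some stabilizer if and only if $u_1$ and $u_2$ are conjugate in $G$, and the conjugating element is exactly an inverse of a fixed point. This already matches the ``conjugate'' hypothesis appearing in both halves of the proposition.

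Next I would identify the ineffective kernel, namely the set of $(u_1,u_2)\in f(U)$ acting trivially on all of $G$. Setting $g=e$ in $u_1 g u_2^{-1} = g$ forces $u_1 = u_2$, and then $u_1 g = g u_1$ for every $g\in G$ forces $u_1\in Z(G)$. Conversely, any such pair clearly acts trivially. So the ineffective kernel equals $\{(u_1,u_2)\in f(U) : u_1=u_2\in Z(G)\}$.

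With these in hand, the two equivalences follow by direct comparison. The action is free exactly when every stabilizer is trivial, which by the first step is exactly the condition that $u_1$ conjugate to $u_2$ forces $u_1=u_2=e$. The action is effectively free exactly when every stabilizer coincides with the ineffective kernel, which by the two steps is the condition that $u_1$ conjugate to $u_2$ forces $u_1=u_2\in Z(G)$.

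The only real subtlety is getting the definition of ``effectively free'' right: one must interpret it as saying that the induced action of $U/K$ is free, where $K$ is the ineffective kernel, i.e.\ that every isotropy group equals $K$. Once that is pinned down, the argument is just the bookkeeping above, and no deep input from the structure of $G$ is needed.
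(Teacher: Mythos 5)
Your proof is correct, and it supplies the argument the paper omits (the paper states Proposition~\ref{free} as a ``simple criterion'' with no proof). Your three steps — computing the pointwise stabilizer $\{(u_1,u_2) : u_2 = g^{-1}u_1 g\}$, computing the ineffective kernel $\{(u_1,u_2) : u_1=u_2\in Z(G)\}$ by first taking $g=e$ and then varying $g$, and observing that freeness (resp.\ effective freeness) is exactly that every stabilizer is trivial (resp.\ equals the ineffective kernel) — are exactly the natural bookkeeping, and you handle the one real subtlety correctly by making explicit what ``effectively free'' means. One minor point worth being aware of, though it does not affect correctness: the criterion as stated is really a criterion on $f(U)$, so ``free'' should be read as freeness of the induced $f(U)$-action (equivalently, freeness of the $U$-action when $f$ is injective); this is how the authors use it throughout, since all their $f$'s are embeddings on the torus level.
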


Since every element of a Lie group $U$ is conjugate to an element in its maximal torus $T_U$, this proposition implies that the $U$ action on $G$ is (effectively) free if and only if the induced action of $T_U$ on $G$ is (effectively) free.

To begin classifying biquotient actions, we note that only the image of $f$ matters when determining the biquotient action.  In fact, the action is determined, up to equivalence, by the conjugacy class of the image of $f$.

\begin{proposition}\label{equiv}

Suppose $f, f':U\rightarrow G\times  G$ are homomorphisms and that the images are conjugate: $f'(u) = (g_1,g_2)f(u) (g_1,g_2)^{-1}$ for all $u\in U$. Then the map $\phi:G\rightarrow G$, defined by $\phi(g) = g_1\, g \, g_2^{-1}$, is an equivariant diffeomorphism between the two actions on $G$.  

\end{proposition}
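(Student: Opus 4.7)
The plan is to verify the two claims separately: first that $\phi$ is a diffeomorphism, and then that it intertwines the two actions.

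For the diffeomorphism claim, I would observe that $\phi$ is the composition $L_{g_1} \circ R_{g_2^{-1}}$ of left translation by $g_1$ and right translation by $g_2^{-1}$. Each of these is a diffeomorphism of $G$ (with inverses $L_{g_1^{-1}}$ and $R_{g_2}$), so their composition is as well. This part is essentially bookkeeping.

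The substantive step is equivariance. Writing the action via $f$ as $u \ast g = f_1(u)\, g\, f_2(u)^{-1}$ and the action via $f'$ as $u \ast' g = f'_1(u)\, g\, f'_2(u)^{-1}$, I would first unpack the conjugacy hypothesis componentwise: since $(g_1, g_2)$ acts on $G \times G$ coordinatewise by conjugation, we have $f'_1(u) = g_1 f_1(u) g_1^{-1}$ and $f'_2(u) = g_2 f_2(u) g_2^{-1}$. Then I would compute both $\phi(u \ast g)$ and $u \ast' \phi(g)$ directly and observe that the inner $g_1^{-1} g_1$ and $g_2 g_2^{-1}$ telescope, leaving $g_1 f_1(u)\, g\, f_2(u)^{-1} g_2^{-1}$ in both cases.

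I do not anticipate any real obstacle: the statement is a formal consequence of how translations interact with conjugation, and the entire proof reduces to a one-line identity after the componentwise conjugacy is written out. The only thing to be careful about is bookkeeping the inverses correctly, so that $(g_2 f_2(u) g_2^{-1})^{-1} = g_2 f_2(u)^{-1} g_2^{-1}$ is applied on the correct side when verifying $u \ast' \phi(g) = \phi(u \ast g)$.
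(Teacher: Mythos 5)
Your proposal is correct and follows essentially the same route as the paper: both unpack the conjugacy componentwise and verify equivariance by the telescoping identity $g_1 u_1 g\, u_2^{-1} g_2^{-1} = (g_1 u_1 g_1^{-1})(g_1 g g_2^{-1})(g_2 u_2^{-1} g_2^{-1})$, while the diffeomorphism claim is handled by exhibiting the inverse (you phrase it as composing translations, the paper writes the inverse directly). Incidentally, the paper's stated inverse $\phi^{-1}(g) = g_1^{-1} g\, g_2^{-1}$ has a sign typo and should read $\phi^{-1}(g) = g_1^{-1} g\, g_2$, which your translation-based argument gets right.
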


\begin{proof}  Let $u = (u_1, u_2)\in f(U)$.  Then \begin{align*}\phi(u\ast g) &= \phi(u_1\, g\, u_2^{-1})\\ &= g_1\, u_1\, g\, u_2^{-1} \, g_2^{-1}\\ &=  (g_1\, u_1\, g_1^{-1})\, g_1 \, g\, g_2^{-1}\, (g_2 \, u_2^{-1}\, g_2^{-1})\\ &= f'(u)\ast (g_1 \, g\, g_2^{-1})\\ &= f'(u)\ast \phi(g),\end{align*} so $\phi$ is an intertwining map.  Furthermore, the inverse is clearly given by $\phi^{-1}(g) = g_1^{-1} \, g\, g_2^{-1}$, so $\phi$ is an equivariant diffeomorphism.

\end{proof}

In particular, the $U$ action on $G$ via $f$ is (effectively) free if and only if the action via $f'$ is (effectively) free and the two quotients are canonically diffeomorphic.  It follows that we may classify all biquotient actions of $U$ on $G$ by classifying the conjugacy classes of images of homomorphisms from $U$ into $G\times G$ and then checking each of these to see if the induced action is effectively free.  Combining this with Proposition \ref{free}, it follows that if $f(T_U)\subseteq T_{G\times G}$, then the action of $U$ on $G$ is (effectively) free if and only if the induced action of $T_U$ on $T_{G\times G}$ is (effectively) free.

We further point out that in the case of $G = Sp(3)$ we can take $T_G$ as the set of diagonal matrices with complex entries of length $1$.  Two such matrices are conjugate in $Sp(3)$ if and only if the entries are the same up to reordering and complex conjugation.

\subsection{Representation theory}
\label{Reptheory}

The primary tool we have for constructing homomorphisms $f:U\rightarrow G^2$ is representation theory.  All of the following information can be found in \cite{FH}.  Recall that a representation of $U$ is a homomorphism $\rho:U\rightarrow Gl(V)$ for some complex vector space $V$.  It is well known that if $U$ is a compact semi-simple Lie group, then $\rho(U)$ is conjugate to a subgroup of $SU(V)$ and that $\rho$ is completely reducible -- every such $\rho$ is a direct sum of irreducible representations.  Furthermore, when $V = \C^{2n}$ we say that the representation $\rho$ is symplectic if the image of $\rho$ is conjugate to the natural subgroup $Sp(n)\subseteq SU(2n)$.

Recall the following well known proposition.

\begin{proposition}\label{symp} A representation $\rho$ of $U$ is symplectic if and only if $\rho \cong \bigoplus_i (\psi_i\oplus \overline{\psi}_i)\oplus\ \bigoplus_j \phi_j,$ where each $\phi_j$ is symplectic and $\overline{\psi}_i$ denotes the conjugate representation of $\psi_i$.

\end{proposition}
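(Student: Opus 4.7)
The plan is to treat the two implications separately, using the Frobenius--Schur classification of irreducible complex representations of a compact group into real (invariant symmetric form), quaternionic (invariant skew form, i.e.\ symplectic), and complex (neither) types. Recall that, after conjugating into $U(V)$, a representation is symplectic precisely when it preserves a non-degenerate skew-symmetric bilinear form.

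For the ``if'' direction, I would construct the required skew form summand by summand. On each $\phi_j$ one is given by hypothesis. For each pair $\psi_i \oplus \overline{\psi}_i$, I would fix an invariant Hermitian form on $\psi_i$ to identify $\overline{\psi}_i$ with the dual $\psi_i^*$, and then use the canonical pairing
\[
\omega\bigl((v,\alpha),(w,\beta)\bigr) = \alpha(w) - \beta(v)
\]
on $V \oplus V^*$, which is invariant, non-degenerate, and skew. The orthogonal direct sum of these forms shows $\rho$ is symplectic.

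For the ``only if'' direction, assume $\rho$ is symplectic with invariant non-degenerate skew form $\omega$, and decompose $\rho \cong \bigoplus_k V_k^{\oplus m_k}$ into isotypic components with pairwise non-isomorphic irreducibles $V_k$. Since $\omega$ yields a $U$-equivariant isomorphism $\rho \to \rho^*$, any non-self-dual irreducible $V_k$ (``complex type'') is paired with $V_k^* \cong \overline{V_k}$, which must occur with the same multiplicity; these copies can be grouped into pairs of the form $\psi \oplus \overline{\psi}$. For self-dual $V_k$, Schur's lemma implies that every invariant bilinear form on $V_k^{\oplus m_k}$ has the shape $\beta_k \otimes B$, where $\beta_k$ is the essentially unique invariant form on $V_k$ (symmetric in the real case, skew in the quaternionic case) and $B$ is a bilinear form on $\C^{m_k}$. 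Skew-symmetry and non-degeneracy of $\omega$ then force $B$ to be a non-degenerate skew form in the real case and a non-degenerate symmetric form in the quaternionic case. In the real case this compels $m_k$ to be even, and since $V_k \cong \overline{V_k}$, the copies can be organized as pairs $V_k \oplus \overline{V_k}$; in the quaternionic case each copy of $V_k$ is already symplectic and becomes one of the $\phi_j$.

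The main obstacle is the Schur-lemma analysis of self-dual isotypic components, in particular the parity argument that forces real-type irreducibles to appear with even multiplicity. Everything else reduces to routine bookkeeping together with the standard identification of $\overline{V}$ with $V^*$ for unitary representations.
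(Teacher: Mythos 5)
The paper gives no proof of this proposition; it is stated as a well-known fact with Fulton--Harris cited as the source, and the standard argument there is exactly the Frobenius--Schur analysis you carry out. Your proof is correct and complete: the ``if'' direction follows from the canonical invariant skew pairing $\omega((v,\alpha),(w,\beta)) = \alpha(w)-\beta(v)$ on $V\oplus V^*$ together with an orthogonal direct sum, and the ``only if'' direction from the equivariant isomorphism $\rho\cong\rho^*$ (which pairs a complex-type isotypic block with its conjugate in equal multiplicity) together with the Schur-lemma computation that the restriction of $\omega$ to a self-dual isotypic block is $\beta_k\otimes B$ with $B$ forced skew (hence $m_k$ even) when $\beta_k$ is symmetric and $B$ symmetric when $\beta_k$ is skew; the ``main obstacle'' you flag at the end is in fact fully resolved by this parity observation.
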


A similar proposition is true for orthogonal representations -- those whose image is conjugate to the natural subgroup $SO(n)$ in $SU(n)$.

Since we are interested in the case $U = Sp(1)^2$, we note that the irreducible representations of a product of compact Lie groups are always given as outer tensor products of irreducible representations of the factors.  We also recall that an outer tensor product of two irreducible representations is symplectic if and only if one of the representations is symplectic and the other is orthogonal, and is orthogonal if and only if they are both symplectic or both orthogonal.

The irreducible representations for every compact simple Lie group have been completely classified.  For $Sp(1)$, we have the following proposition.

\begin{proposition}\label{sp1irrep} For each $n \geq 1$, $Sp(1)$ has a unique irreducible representation of dimension $n$.  When $n$ is even this representation is symplectic, and when $n$ is odd this representation is orthogonal.

\end{proposition}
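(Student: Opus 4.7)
My plan is to invoke the standard classification of irreducible representations of $SU(2)$, since $Sp(1)\cong SU(2)$, and then determine which form (symmetric or skew) the invariant bilinear form on each irrep carries by an explicit construction via symmetric powers.

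First, I would construct the relevant representations explicitly. Let $V = \C^2$ denote the standard representation of $Sp(1)$ acting by left multiplication. For each $k\geq 0$, set $V_k = \operatorname{Sym}^k V$, so that $\dim V_k = k+1$. Restricting to the maximal torus of diagonal matrices $\operatorname{diag}(z,\bar z)$ with $|z|=1$, the character of $V_k$ is $z^k+z^{k-2}+\cdots+z^{-k}$, which I would show is a Weyl-invariant Laurent polynomial whose $L^2$-norm on the torus is $1$. By the standard Peter--Weyl/character argument, this forces each $V_k$ to be irreducible, and forces every irreducible representation of $Sp(1)$ to appear in the list $\{V_k\}_{k\geq 0}$ (since these characters already span all Weyl-invariant Laurent polynomials). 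Setting $n = k+1$ yields the existence and uniqueness claim.

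Next, to decide the symplectic/orthogonal nature of $V_{n-1}$, I would use the invariant symplectic form $\omega$ on $V = \C^2$ that defines $Sp(1)\subseteq SU(2)$ in the first place. The form $\omega^{\otimes(n-1)}$ is an $Sp(1)$-invariant bilinear form on $V^{\otimes(n-1)}$, and restricting it to the invariant subspace $\operatorname{Sym}^{n-1}V = V_{n-1}$ yields a nonzero (hence, by irreducibility and Schur, nondegenerate) invariant bilinear form $B$. Since $\omega$ is skew-symmetric, swapping the two tensor arguments picks up a sign of $(-1)^{n-1}$, so $B$ is symmetric when $n$ is odd and skew-symmetric when $n$ is even. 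This provides an invariant orthogonal structure on $V_{n-1}$ for $n$ odd and an invariant symplectic structure for $n$ even. Combined with Proposition \ref{symp} (or directly with the observation that an irreducible representation cannot admit both a symmetric and a skew invariant form), this is exactly the claimed dichotomy.

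The only nontrivial point is checking that the induced form $B$ on $\operatorname{Sym}^{n-1}V$ is nonzero; I expect this to be immediate from evaluating $B$ on a pair of monomials $e_1^a e_2^b$ and $e_2^a e_1^b$ using $\omega(e_1,e_2) = 1$, $\omega(e_2,e_1) = -1$. Everything else is standard $SU(2)$ representation theory, so I do not anticipate a genuine obstacle.
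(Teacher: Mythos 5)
The paper does not prove this proposition; it is quoted as standard background from Fulton--Harris \cite{FH}, so there is no in-paper argument to compare against. Your proof is correct and is one of the standard textbook arguments.

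Two small remarks. First, the phrase ``$L^2$-norm on the torus is $1$'' is off as stated: with normalized Haar measure on the circle, the $L^2$-norm of $z^k + z^{k-2} + \cdots + z^{-k}$ is $\sqrt{k+1}$. What you actually want is the inner product of class functions on $SU(2)$ computed via the Weyl integration formula, which inserts the factor $\tfrac{1}{2}\abs{z - z^{-1}}^2$; with that weight the norm is indeed $1$, since $\chi_k(z)\,(z - z^{-1}) = z^{k+1} - z^{-(k+1)}$. This is presumably what you meant by ``Peter--Weyl/character argument,'' but the formula should be stated precisely, because without the Weyl denominator the irreducibility criterion fails. Second, for the nonvanishing of $B$ on $\operatorname{Sym}^{n-1}V$ the cleanest check is $B\bigl(e_1^{\otimes(n-1)}, e_2^{\otimes(n-1)}\bigr) = \omega(e_1,e_2)^{n-1} = 1$; both of those tensors are already symmetric, so no symmetrization bookkeeping is needed. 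With $B \ne 0$, self-duality of $V_{n-1}$ (which follows from uniqueness in each dimension) plus Schur gives nondegeneracy, and the sign computation $(-1)^{n-1}$ on swapping arguments is exactly right. As an aside, the same dichotomy can be read off from the Frobenius--Schur indicator $\int_G \chi_k(g^2)\,dg = (-1)^k$, which is the route taken in \cite{FH}; your explicit construction of the invariant form via $\omega^{\otimes(n-1)}$ is a bit more hands-on and gives the form itself rather than just its existence.
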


In particular, every representation of $Sp(1)$ is either symplectic or orthogonal.  This, in turn, implies that in the case of $U = Sp(1)^2$, every irreducible representation is either symplectic or orthogonal.  Since orthogonal representations are equivalent to their conjugates, Proposition \ref{symp} implies that a representation of either $Sp(1)$ or $U$ is symplectic iff every irreducible orthogonal subrepresentation appears with even multiplicity.

Finally, we have a theorem due to Mal'cev \cite{Ma} which connects representation theory with Proposition \ref{equiv}.

\begin{theorem}\label{Mal'cev}Let $f_1, f_2:H\rightarrow G$ with $$G\in\{ SU(n), U(n), Sp(n), SO(n)\}$$ be interpreted as an $n$-dimensional complex, symplectic, or orthogonal representation.  If the representations are equivalent, then the images are conjugate in $G$ except when $G = SO(n)$ and $n$ is even.  In this case, the images are conjugate in $O(n)$ and conjugate in $SO(n)$ if and only if there is at least one irreducible factor of odd dimension.

\end{theorem}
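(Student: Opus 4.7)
The plan is to exhibit a $\mathbb{C}$-linear intertwiner $A$ (which exists by the assumption of equivalence) and then use polar decomposition together with the invariant forms preserved by $f_1, f_2$ to modify it into an element of the target compact group $G$.

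For $G = U(n)$, both representations preserve the standard Hermitian inner product, so taking adjoints of the intertwining relation $A f_1(h) = f_2(h) A$ shows $A^\ast$ intertwines $f_2$ with $f_1$, and hence $A^\ast A$ lies in $\mathrm{End}_H(V)$. Its positive square root $P = (A^\ast A)^{1/2}$ likewise commutes with $f_1(H)$, so $U = AP^{-1}$ is a unitary intertwiner. The case $G = SU(n)$ follows by scaling $U$ by a scalar $c$ with $c^n = \det(U)^{-1}$. For $G = Sp(n)$ and $G = SO(n)$, the same strategy applies with the Hermitian form replaced by the invariant symplectic or real symmetric bilinear form. The form pulled back by $A$ differs from the original by an element of the $H$-centralizer, and a square-root-type construction within this centralizer yields a correction; for $G = Sp(n)$ this completes the proof, while for $G = SO(n)$ the procedure produces an intertwiner in $O(n)$ that must still be refined.

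The $SO(n)$ refinement is the interesting case. When $n$ is odd, the element $-A \in O(n)$ is also an intertwiner with opposite determinant, so one of $\pm A$ lies in $SO(n)$. When $n$ is even, $\det(-I) = +1$, and we must adjust $A$ by an element of the $O(V)$-centralizer of $f_1(H)$. If there is an irreducible constituent of odd dimension $d$, then acting as $-\mathrm{Id}$ on one isotypic copy of that constituent and as $\mathrm{Id}$ elsewhere is an orthogonal intertwiner of the representation with itself having determinant $(-1)^d = -1$, which furnishes the needed correction. The main obstacle is the converse: showing that when $n$ is even and every irreducible constituent has even dimension, every element of the orthogonal centralizer has determinant $+1$, so that $\det A$ is an invariant of the pair $(f_1, f_2)$ genuinely obstructing $SO(n)$-conjugacy. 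This requires analyzing the isotypic decomposition according to the real, complex, and quaternionic type of each irreducible and identifying each orthogonal centralizer block as a classical group whose image under the ambient determinant is computable.
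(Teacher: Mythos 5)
The paper does not prove this statement; it quotes it as a known theorem and cites Mal'cev \cite{Ma}. So there is no in-paper proof to compare yours against, and I can only assess your sketch on its own terms.

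Your outline is the standard and correct route. The polar decomposition trick for $U(n)$ and the scalar rescaling for $SU(n)$ are fine. For $Sp(n)$ and $O(n)$, the key observation you use -- that the pulled-back invariant form $\omega_A := A^*\omega A$ (or $A^TQA$ in the orthogonal case) lies in the $H$-equivariant endomorphism algebra after comparison with $\omega$, and can be corrected by a square root taken inside that commutant -- is exactly right, though you should verify that the square root of $\omega^{-1}\omega_A$ is well defined and again $H$-equivariant (it is, since the element is positive self-adjoint and the commutant is closed under the continuous functional calculus). For the $SO(n)$ refinement, your analysis of the odd-$n$ case via $-A$ and the even-$n$ case via a determinant $-1$ element of the orthogonal centralizer is correct. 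The converse you flag as the main remaining work is indeed where the content lies, and your plan is the right one: decompose into isotypic blocks, note that the orthogonal centralizer on a block with multiplicity $m$ is $O(m)$, $U(m)$, or $Sp(m)$ according to whether the irreducible is of real, complex, or quaternionic type, and compute the image of the ambient real determinant. Since a real irreducible of complex or quaternionic type has even real dimension, and the real determinant of a unitary or compact-symplectic block is always $+1$ (being $|\det_{\mathbb{C}}|^2$, respectively $1$ by connectedness), the only source of determinant $-1$ is a real-type block of odd dimension $d$, where $g\otimes I_d$ has determinant $\det(g)^d$. Filling in that computation completes the proof. So the proposal is essentially correct; it is not comparable to the paper since the paper omits the argument entirely.
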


In particular, for $G = Sp(3)$ this theorem tells us that if two subgroups are not conjugate, then the corresponding representations cannot be equivalent.  Hence, to classify the embeddings of $Sp(1)^2$ into $Sp(3)$ up to conjugacy, we begin with the representation theory problem of classifying $3$-dimensional symplectic representations of $Sp(1)^2$.

\subsection{The geometry of biquotients}
\label{geo}

There are two primary methods for constructing metrics on biquotients: Cheeger deformations \cite{Ch1} and Wilking's doubling trick \cite{Wi}.  The starting point for both is the well known observation, due to O'Neill \cite{On1}, that Riemannian submersions are curvature non-decreasing.  Moreover, it is also known that if a compact Lie group $U$ acts isometrically and effectively freely on a manifold $M$, then the orbit space inherits a canonical smooth structure and Riemannian metric for which the natural projection is a Riemannian submersion. For our purposes we will always have that $M$ is a Lie group equipped with a Riemannian metric of non-negative sectional curvature, and hence all of our biquotients will automatically be non-negatively curved.

We now describe Cheeger deformations and Wilking's doubling trick following the exposition in \cite{Ke2}.  For $G$ a compact Lie group and $K\subseteq G$ a closed subgroup, we equip $G$ with a left invariant, right $K$-invariant metric $g$.  For each $t > 0$, $K$ acts on $(G\times K, g+tg|_K)$ isometrically and freely via the action $h\ast(g,k) = (gh^{-1}, hk)$.  It is clear that the map $\psi:G\times K\rightarrow G$ given by $\psi(g,k) = gk$ is $K$-invariant and descends to a diffeomorphism $G\times_K K\rightarrow G$.  Transporting the submersion metric on $G\times_K K$ to $G$ via $\psi$, we obtain a new metric $g_1$ on $G$.

The isometric action of $G$ on $G\times\{e\}\subseteq G\times K$ given by left multiplication commutes with the $K$ action on $G\times K$, and hence descends to a transitive isometric action of $G$ on $(G,g_1)$, so $g_1$ is left invariant.  Likewise, the isometric action of $K$ on $\{e\}\times K\subseteq G\times K$ given by right multiplication descends to an isometric action on $(G,g_1)$, so $g_1$ is right $K$-invariant.  If $g$ has non-negative sectional curvature and, in particular, if $g$ is bi-invariant, then $g_1$ is also non-negatively curved.

We note that if $g$ is bi-invariant, $g_1$ can be described in the following way.  Using $g$, the Lie algebra of $G$, $\mathfrak{g}$, decomposes as $\mathfrak{g} = \mathfrak{k}\oplus \mathfrak{p}$ where $\mathfrak{k}$ is the Lie algebra of $K$.  For $X\in\mathfrak{g}$, write $X = X_\mathfrak{k} + X_\mathfrak{p}$.  Then, we have $g_1(X,Y) = g(X,\Phi_1 Y)$ where $\Phi_1(Y) = Y_\mathfrak{p} + \frac{t}{t+1} Y_\mathfrak{k}$.  Note that clearly $\Phi_1$ is invertible and $\Phi_1^{-1}(Y) = Y_\mathfrak{p} + \frac{t+1}{t}Y_\mathfrak{k}.$

In this situation, much is known about the $0$ curvature planes with respect to $g_1$.  The following result may be found in \cite{Es2}.

\begin{theorem}\label{1def}  Let $K\subseteq G$ be compact Lie groups and assume $(G,K)$ is a symmetric pair.  Let $g_1$ be Cheeger deformation of a bi-invariant metric $g$ in the direction of $K$ with parameter $t$.  Then a plane $\sigma = \operatorname{span}\{ \Phi_1^{-1}X, \Phi_1^{-1} Y\}$ has $0$ sectional curvature with respect to $g_1$ if and only if $$[X,Y] = [X_\mathfrak{k},Y_\mathfrak{k}] = [X_\mathfrak{p}, Y_\mathfrak{p}] = 0.$$

\end{theorem}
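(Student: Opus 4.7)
The plan is to realize $(G, g_1)$ as the base of the Riemannian submersion $\psi\colon (G \times K,\, g + tg|_K) \to G$, $\psi(g,k) = gk$, with respect to the $K$-action $h\ast (g,k) = (gh^{-1}, hk)$ described above, and then invoke O'Neill's formula. Because $g$ is bi-invariant and the total space carries a product of two (rescaled) bi-invariant metrics on compact Lie groups, the total space has non-negative sectional curvature; hence O'Neill expresses
$$K_{g_1}(\sigma) \;=\; K_{g+tg|_K}(\tilde\sigma) \;+\; \frac{3}{4}\cdot\frac{\|[\tilde X, \tilde Y]^V\|^2}{\|\tilde X \wedge \tilde Y\|^2}$$
as a sum of two non-negative terms, so $K_{g_1}(\sigma) = 0$ if and only if both vanish.

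First I would identify the horizontal lift at $(e,e)$. The fundamental vector fields of the $K$-action are $(-Z, Z)$ for $Z \in \mathfrak{k}$, so a vector $(A, B) \in \mathfrak{g} \oplus \mathfrak{k}$ is horizontal iff $-g(A_\mathfrak{k}, Z) + t\,g(B, Z) = 0$ for every $Z \in \mathfrak{k}$, forcing $B = A_\mathfrak{k}/t$. Imposing $d\psi(A, A_\mathfrak{k}/t) = A + A_\mathfrak{k}/t = \Phi_1^{-1}X$ and unpacking $\Phi_1^{-1}$ shows the horizontal lift of $\Phi_1^{-1}X$ is $(X, X_\mathfrak{k}/t)$. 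Since for any bi-invariant metric $h$ one has $\langle R(U,V)V,U\rangle_h = \frac{1}{4}|[U,V]|_h^2$, the product-metric curvature of the horizontal plane equals $\frac{1}{4}|[X,Y]|^2 + \frac{1}{4t^3}|[X_\mathfrak{k}, Y_\mathfrak{k}]|^2$, which vanishes exactly when $[X,Y] = 0$ and $[X_\mathfrak{k}, Y_\mathfrak{k}] = 0$. Under these two relations, $[\tilde X, \tilde Y] = ([X,Y],\, [X_\mathfrak{k}, Y_\mathfrak{k}]/t^2) = 0$, so the O'Neill correction vanishes automatically.

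It remains to convert the pair of conditions $[X,Y] = 0$, $[X_\mathfrak{k}, Y_\mathfrak{k}] = 0$ into the three appearing in the theorem; this is where the symmetric pair assumption is used. From $[\mathfrak{k},\mathfrak{k}]\subseteq\mathfrak{k}$, $[\mathfrak{k},\mathfrak{p}]\subseteq\mathfrak{p}$, and $[\mathfrak{p},\mathfrak{p}]\subseteq\mathfrak{k}$ one has $[X,Y]_\mathfrak{k} = [X_\mathfrak{k}, Y_\mathfrak{k}] + [X_\mathfrak{p}, Y_\mathfrak{p}]$, so $[X,Y] = 0$ together with $[X_\mathfrak{k}, Y_\mathfrak{k}] = 0$ forces $[X_\mathfrak{p}, Y_\mathfrak{p}] = 0$; the reverse implication is trivial. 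I expect the main obstacle to be the bookkeeping in the middle step --- keeping the $t$-rescalings in $\Phi_1$, in the horizontal lift, and in the product metric mutually consistent --- rather than any conceptual difficulty, since everything else reduces to a routine application of O'Neill's formula together with the standard bi-invariant curvature identity.
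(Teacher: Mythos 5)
The paper does not actually prove Theorem \ref{1def}; it simply cites Eschenburg's Habilitation \cite{Es2}, so there is no internal proof to compare against. That said, your strategy is the standard one and your conclusion is correct; I would just tighten one step.

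Your structure is sound: you correctly identify the horizontal lift of $\Phi_1^{-1}X$ at $(e,e)$ as $(X, X_\mathfrak{k}/t)$, you correctly compute the unnormalized sectional curvature of the product metric on the lifted plane as $\tfrac14\abs{[X,Y]}^2 + \tfrac{1}{4t^3}\abs{[X_\mathfrak{k},Y_\mathfrak{k}]}^2$, and the conversion of the two conditions $[X,Y]=0$, $[X_\mathfrak{k},Y_\mathfrak{k}]=0$ into the three stated ones via $[X,Y]_\mathfrak{k}=[X_\mathfrak{k},Y_\mathfrak{k}]+[X_\mathfrak{p},Y_\mathfrak{p}]$ is exactly the right use of the symmetric-pair hypothesis.

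The one step that needs more care is the claim that, once $[X,Y]=0$ and $[X_\mathfrak{k},Y_\mathfrak{k}]=0$, the O'Neill term vanishes ``because $[\tilde X,\tilde Y]=([X,Y],[X_\mathfrak{k},Y_\mathfrak{k}]/t^2)=0$.'' The quantity $[\tilde X,\tilde Y]$ that enters O'Neill's formula is the Lie bracket of the \emph{basic horizontal extensions} of $\tilde X$ and $\tilde Y$, not the Lie-algebra bracket of the two horizontal vectors. The horizontal distribution of the submersion $\psi\colon G\times K\to G$ is not left-invariant (left translation by $(a,b)\in G\times K$ moves the $K$-orbit through $(g,k)$ to the orbit through $(agb,k)$, not $(ag,bk)$), so the left-invariant extensions of $\tilde X$ and $\tilde Y$ are not horizontal and their bracket does not equal the Lie-algebra bracket. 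A direct computation of the basic horizontal extension in the left trivialization, $\tilde X(g,k) = (\operatorname{Ad}_k X,\ \tfrac1t X_\mathfrak{k})$ (here one uses that $\operatorname{Ad}_k$ commutes with $\Phi_1$ because $K$ preserves both $\mathfrak{k}$ and $\mathfrak{p}$), gives at $(e,e)$
$$[\tilde X,\tilde Y] \;=\; \Bigl([X,Y] + \tfrac1t[X_\mathfrak{k},Y] + \tfrac1t[X,Y_\mathfrak{k}],\ \tfrac{1}{t^2}[X_\mathfrak{k},Y_\mathfrak{k}]\Bigr),$$
which has extra $G$-component terms compared to your expression. Projecting onto the vertical space $\{(-Z,Z): Z\in\mathfrak{k}\}$ and simplifying, one finds the A-tensor vanishes precisely when $[X,Y]_\mathfrak{k} = -\tfrac1t[X_\mathfrak{k},Y_\mathfrak{k}]$. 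This is indeed implied by $[X,Y]=0$ and $[X_\mathfrak{k},Y_\mathfrak{k}]=0$, so your final conclusion stands; but the justification as written conflates two different brackets, and the correct vertical projection is \emph{not} simply the vertical projection of $([X,Y],[X_\mathfrak{k},Y_\mathfrak{k}]/t^2)$. I would either redo that step with the actual horizontal-extension bracket, or simply observe that the forward direction (zero base curvature implies all three conditions) follows because both summands in O'Neill are nonnegative, and then verify the converse by the explicit A-tensor computation above.
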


With respect to a bi-invariant metric, a plane $\operatorname{span}\{X,Y\}$ has $0$ curvature if and only if $[X,Y]=0$. We see that with the metric $g_1$ there are more constraints to satisfy, hence we expect $g_1$ to have fewer $0$ curvature planes.

\

Wilking's doubling trick, first observed in \cite{Wi}, arises from the simple observation that any biquotient $G\bq U$ defined by a subgroup $U\subseteq G\times G$ is canonically diffeomorphic to $\Delta G\backslash G\times G/ U$; a diffeomorphism is induced from the map $G\times G\rightarrow G$ sending $(g,h)$ to $g^{-1}h$.  Let $\pi_1,\pi_2:G\times G\rightarrow G$ be the two projection maps.  Consider subgroups $\pi_1(U)\subseteq H\subseteq G$ and $\pi_2(U)\subseteq K \subseteq G$.  Let $g_l$ denote the metric obtained by Cheeger deforming a bi-invariant metric in the direction of $H$, and let $g_r$ denote the metric obtained by deforming a bi-invariant metric in the direction of $K$.  Equipping $G\times G$ with the metric $g_l + g_r$, we see that $U$ acts by isometries, and hence the quotient map $G\times G\rightarrow \Delta G\backslash G\times G/U\cong G\bq U$ induces a new metric $g_0$ on $G\bq U$.  As before, the new metric will be non-negatively curved as well.

To understand when a tangent plane has $0$ curvature, Wilking \cite{Wi} proves that for each $(p,e)\in G$, the horizontal space with respect to $g_l+g_r$ of the $\Delta G\times U$ action is given by \begin{align*} \mathcal{H}_p = \Big\{\left(-\Phi_l^{-1} (Ad_{p^{-1}} X), \Phi_r^{-1} X\right)&: g(X, Ad_p u_1 - u_2) = 0 \\ & \text{ for all }(u_1,u_2)\in\mathfrak{u}\subseteq \mathfrak{g}\oplus\mathfrak{g}\Big\}.\end{align*}  Wilking then proves the following.

\begin{theorem}\label{Wilking} Fix a bi-invariant metric $g$ on $G$, and let $g_0$ be the metric on $G\bq U$ be constructed as above.  Let $\Phi_l$ be defined by $g(X,\Phi_l Y) = g_l(X,Y)$ and define $\Phi_r$ analogously.  Then, there is a $0$ curvature plane at the point $[p^{-1}]\in G\bq U$ if and only if there are two linearly independent vectors $$\hat{X_i} = (-\Phi_l^{-1}(Ad_{p^{-1}} X_i), \Phi_r^{-1}X_i) \in \mathcal{H}_p$$ with the property that $$\sec_{g_l}( \operatorname{span}\{ \Phi_l^{-1}(Ad_{p^{-1}} X_1), \Phi_l^{-1}(Ad_{p^{-1}} X_2)\}) = 0$$ and $$\sec_{g_r}(\operatorname{span}\{ \Phi_r^{-1} X_1, \Phi_r^{-1} X_2\}) = 0.$$

\end{theorem}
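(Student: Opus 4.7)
The plan is to view $(G \bq U, g_0)$ as the base of a Riemannian submersion from $(G \times G, g_l + g_r)$ under the free isometric action of $\Delta G \times U$, and then combine O'Neill's formula with the product structure of the metric upstairs. Since a bi-invariant metric has non-negative sectional curvature, Cheeger deformations preserve non-negativity, and products of non-negatively curved metrics are non-negatively curved, everything in sight is non-negatively curved. For a horizontal plane $\tilde\sigma \subseteq \mathcal{H}_p$ with image $\sigma \subseteq T_{[p^{-1}]}(G \bq U)$ spanned by orthonormal $\tilde X, \tilde Y$, O'Neill's formula gives
\[
\sec_{g_0}(\sigma) \;=\; \sec_{g_l+g_r}(\tilde\sigma) \;+\; \tfrac{3}{4}\bigl|[\tilde X,\tilde Y]^{\mathcal V}\bigr|^{2},
\]
where $[\cdot,\cdot]^{\mathcal V}$ denotes the vertical component of the bracket. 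Both summands are non-negative, so $\sec_{g_0}(\sigma) = 0$ if and only if $\sec_{g_l+g_r}(\tilde\sigma) = 0$ and $[\tilde X, \tilde Y]^{\mathcal V} = 0$.

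Next, the product metric decomposes the curvature additively: for $\hat X_i = (A_i, B_i)$,
\[
R_{g_l+g_r}(\hat X_1, \hat X_2, \hat X_2, \hat X_1) \;=\; R_{g_l}(A_1, A_2, A_2, A_1) + R_{g_r}(B_1, B_2, B_2, B_1),
\]
with each summand non-negative by non-negativity of $g_l$ and $g_r$. Substituting the explicit form of $\mathcal{H}_p$ recorded just before the theorem, $(A_i, B_i) = \bigl(-\Phi_l^{-1}(\mathrm{Ad}_{p^{-1}} X_i),\, \Phi_r^{-1} X_i\bigr)$, and using that the overall sign of $A_i$ is irrelevant for sectional curvatures of planes, the vanishing of $\sec_{g_l+g_r}(\tilde\sigma)$ translates verbatim into the two displayed sectional-curvature conditions in the theorem. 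Linear independence of $\hat X_1, \hat X_2 \in \mathcal{H}_p$ is equivalent to $\sigma$ being a genuine $2$-plane downstairs, since $\mathcal{H}_p$ maps isomorphically onto $T_{[p^{-1}]}(G \bq U)$.

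The main obstacle is reconciling the vertical bracket condition $[\tilde X, \tilde Y]^{\mathcal V} = 0$ with the stated curvature conditions. I would address this by expanding $[\hat X_1, \hat X_2]$ coordinate-wise in $\mathfrak g \oplus \mathfrak g$, projecting onto the vertical space $\mathcal V_{(p,e)} = \{(\mathrm{Ad}_{p^{-1}} H - u_1,\, H - u_2) : H \in \mathfrak g,\, (u_1, u_2) \in \mathfrak u\}$, and invoking Theorem \ref{1def} applied separately to $g_l$ and $g_r$ to extract the commutation identities forced by the two curvature-vanishings. This coupling—which is precisely where the choice of Cheeger-deformed factors pays off—is the technical heart of Wilking's original argument \cite{Wi}, recapitulated in Kerin's exposition \cite{Ke2}; I would follow that line of reasoning to close the iff. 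Everything else (O'Neill's formula, the product decomposition, the horizontal space identification) is formal.
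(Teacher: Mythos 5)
The paper does not prove this theorem; it simply attributes it to Wilking \cite{Wi} and records the statement. So your attempt is filling in a proof the paper omits, and the framework you set up is the right one for the easy direction. O'Neill's formula together with non-negativity of $g_l+g_r$ shows that if $\sec_{g_0}(\sigma)=0$ downstairs, then $\sec_{g_l+g_r}(\tilde\sigma)=0$ upstairs, and the additive product decomposition of the curvature tensor then forces both factor sectional curvatures to vanish. The sign and linear-independence remarks are also correct.

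The gap is in the converse, and it is a genuine one. Vanishing of $\sec_{g_l}$ and $\sec_{g_r}$ on the factors gives $\sec_{g_l+g_r}(\tilde\sigma)=0$, but O'Neill's formula then reads $\sec_{g_0}(\sigma)=\frac{3}{4}\abs{[\tilde X,\tilde Y]^{\mathcal V}}^2 \geq 0$, so you still must show the vertical part of the bracket (equivalently $A_{\tilde X}\tilde Y$) vanishes. You correctly identify this as the obstacle, but your plan to ``invoke Theorem~\ref{1def}'' does not close it: Theorem~\ref{1def} only characterizes which planes are flat for a Cheeger metric via the relations $[X,Y]=[X_{\mathfrak k},Y_{\mathfrak k}]=[X_{\mathfrak p},Y_{\mathfrak p}]=0$; it says nothing about the $A$-tensor of the submersion $G\times G\to G\bq U$, which is a different object. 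What is actually needed is the nontrivial theorem of Wilking (later systematized by Tapp in ``Flats in Riemannian submersions from Lie groups'') that for metrics obtained by Cheeger deformation of a bi-invariant metric, horizontal zero-curvature planes automatically have vanishing $A$-tensor, i.e.\ horizontal flats project to flats. Without supplying that computation --- which goes through the explicit form of the vertical space and the way $\Phi_l^{-1},\Phi_r^{-1}$ interact with the bracket, not merely through the commutation identities of Theorem~\ref{1def} --- the ``only if'' direction of the equivalence remains unproved.
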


For our particular case, combining Theorems \ref{1def} and \ref{Wilking}, we have the following corollary.

\begin{corollary}\label{eqns}

Suppose $U\subseteq H\times K\subseteq G\times G$ and that $U$ acts on $G$ freely.  Assume $(G,K)$ and $(G,H)$ are both symmetric pairs.  Let $g_l$ and $g_r$ denote the Cheeger deformation of a bi-invariant metric in the direction of $H$ and $K$, respectively, and let $g$ be the metric induced on $G\bq U$ from the canonical diffeomorphism $\Delta G\backslash (G,g_l)\times (G,g_r)/U \rightarrow G\bq U$.  Finally, write $\mathfrak{g} = \mathfrak{k}\oplus\mathfrak{p} =\mathfrak{h}\oplus\mathfrak{q}$.   Then, there is a $0$ curvature plane at the point $[p^{-1}]\in G\bq U$ if and only if there are linearly independent vectors $X$ and $Y$ in $\mathfrak{g}$ with the property that

1.  $g_0(X, Ad_p u_1 - u_2) = g_0(Y, Ad_p u_1 - u_2) = 0$ for all $(u_1,u_2)\in\mathfrak{u}\subseteq \mathfrak{g}\oplus\mathfrak{g},$

2.  $[X,Y] = [X_{\mathfrak{k}}, Y_{\mathfrak{k}}] = [X_{\mathfrak{p}}, Y_{\mathfrak{p}}] = 0,$ and

3.  $[(Ad_{p^{-1}} X)_{\mathfrak{h}}, (Ad_{p^{-1}} Y)_{\mathfrak{h}}] = [(Ad_{p^{-1}} X)_{\mathfrak{q}}, (Ad_{p^{-1}} Y)_{\mathfrak{q}}] = 0$.

\end{corollary}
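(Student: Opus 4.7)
The plan is to obtain the corollary as a direct combination of Theorem \ref{Wilking} with two applications of Theorem \ref{1def}, one for each of the two Cheeger-deformed factor metrics. No new geometric input is needed; the content is purely bookkeeping, so I expect no substantial obstacle beyond tracking which symmetric pair and which Cartan-type decomposition plays which role.

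First I would invoke Theorem \ref{Wilking} verbatim. It asserts that a $0$ curvature plane at $[p^{-1}]\in G\bq U$ exists if and only if there are linearly independent $X_1,X_2\in\mathfrak{g}$ such that $\hat{X_i}=(-\Phi_l^{-1}(Ad_{p^{-1}}X_i),\Phi_r^{-1}X_i)$ lies in $\mathcal{H}_p$, and such that the planes $\operatorname{span}\{\Phi_l^{-1}(Ad_{p^{-1}}X_1),\Phi_l^{-1}(Ad_{p^{-1}}X_2)\}$ and $\operatorname{span}\{\Phi_r^{-1}X_1,\Phi_r^{-1}X_2\}$ are flat with respect to $g_l$ and $g_r$ respectively. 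The horizontality condition $\hat{X_i}\in\mathcal{H}_p$ is exactly the pairing condition $g(X_i,Ad_p u_1-u_2)=0$ for all $(u_1,u_2)\in\mathfrak{u}$ coming from the explicit description of $\mathcal{H}_p$ recalled just before Theorem \ref{Wilking}; writing $X=X_1$ and $Y=X_2$, this is condition (1) of the corollary.

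Next, because $(G,K)$ is a symmetric pair and $g_r$ is the Cheeger deformation of a bi-invariant metric in the direction of $K$, I would apply Theorem \ref{1def} with the role of $K$ played by $K$ itself and the decomposition $\mathfrak{g}=\mathfrak{k}\oplus\mathfrak{p}$. Setting $\sigma=\operatorname{span}\{\Phi_r^{-1}X,\Phi_r^{-1}Y\}$, flatness of $\sigma$ with respect to $g_r$ translates immediately into condition (2): $[X,Y]=[X_{\mathfrak{k}},Y_{\mathfrak{k}}]=[X_{\mathfrak{p}},Y_{\mathfrak{p}}]=0$. Symmetrically, since $(G,H)$ is a symmetric pair and $g_l$ is the Cheeger deformation in the direction of $H$, Theorem \ref{1def} applied to $\operatorname{span}\{\Phi_l^{-1}(Ad_{p^{-1}}X),\Phi_l^{-1}(Ad_{p^{-1}}Y)\}$ with the decomposition $\mathfrak{g}=\mathfrak{h}\oplus\mathfrak{q}$ yields $[Ad_{p^{-1}}X,Ad_{p^{-1}}Y]=[(Ad_{p^{-1}}X)_{\mathfrak{h}},(Ad_{p^{-1}}Y)_{\mathfrak{h}}]=[(Ad_{p^{-1}}X)_{\mathfrak{q}},(Ad_{p^{-1}}Y)_{\mathfrak{q}}]=0$.

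Finally I would observe that $Ad_{p^{-1}}$ is a Lie algebra automorphism, so $[Ad_{p^{-1}}X,Ad_{p^{-1}}Y]=Ad_{p^{-1}}[X,Y]$, which already vanishes by condition (2); the remaining two vanishing brackets are precisely condition (3). The two directions of the biconditional follow from the same calculation run in reverse, since both Theorem \ref{1def} and Theorem \ref{Wilking} are stated as equivalences. The only point requiring minor care is to keep track of which decomposition ($\mathfrak{k}\oplus\mathfrak{p}$ versus $\mathfrak{h}\oplus\mathfrak{q}$) is attached to which Cheeger factor, but this is forced by the convention that $g_r$ deforms in the direction of $K$ and $g_l$ in the direction of $H$.
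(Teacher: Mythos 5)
Your proof is correct and takes precisely the approach the paper intends: the paper introduces the corollary with the one-line remark ``combining Theorems \ref{1def} and \ref{Wilking}'' and supplies no further detail, so your careful unwinding of the horizontality condition and the two applications of Theorem \ref{1def} (one per Cheeger factor, with the appropriate Cartan decomposition) is exactly the right fill-in. Your observation that $[Ad_{p^{-1}}X, Ad_{p^{-1}}Y] = Ad_{p^{-1}}[X,Y]$ is redundant given condition~(2), so that only the two $\mathfrak{h}$- and $\mathfrak{q}$-bracket vanishings survive in condition~(3), is precisely the point that makes the corollary's statement clean.
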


We note that if $X$ and $Y$ satisfy all three equations, then any pair of independent vectors having the same span as $X$ and $Y$ do as well.

\subsection{The topology of biquotients}
\label{top}

We will follow a method of Singhof \cite{Si1} and Eschenburg \cite{Es3} for computing the cohomology rings and characteristic classes of these biquotients.  We begin by letting $EG$ denote a contractible space on which $G$ acts freely, and hence $BG = EG/G$ will be the classifying space of $G$.  If the $U$ biquotient action on $G$ is free, then the projection $\pi:G\rightarrow G\bq U$ is an $U$-principal bundle, and is therefore classified by a map $\phi_U:G\bq U\rightarrow BU$.

Eschenburg \cite{Es3} has shown:

\begin{proposition}\label{commute} Suppose $\phi:U\rightarrow G\times G$ induces a free action of $U$ on $G$ and consider the fibration $\sigma:G\rightarrow B\Delta G\rightarrow BG\times BG$ induced by the diagonal inclusion $\Delta: G\rightarrow G\times G$.  There is a map $\phi_G:G\bq U\rightarrow B\Delta G$ so that the following is, up to homotopy, a pullback of fibrations.

\begin{diagram}
G & \rTo & G\bq U & \rTo^{\phi_U} & BU\\
& & \dTo^{\phi_G} & & \dTo^{Bf}\\
G & \rTo & B{\Delta G} &\rTo^{B\Delta} & BG\times BG\\
\end{diagram}

\end{proposition}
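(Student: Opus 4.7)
The plan is to exploit the diffeomorphism $G\bq U \cong \Delta G\backslash (G\times G)/U$ induced by the map $(g_1,g_2)\mapsto g_1^{-1}g_2$, and to build the two classifying maps directly from principal-bundle data. First I would observe that, under the assumption that $U$ acts freely on $G$, the combined $(\Delta G\times U)$-action on $G\times G$ (with $\Delta G$ acting by left diagonal multiplication and $U$ acting on the right through $\phi$) is itself free. Consequently $G\times G \to G\bq U$ is a principal $(\Delta G\times U)$-bundle. Its two partial quotients produce the principal $U$-bundle $G = (G\times G)/\Delta G \to G\bq U$ and the principal $\Delta G$-bundle $(G\times G)/U \to G\bq U$, and I would define $\phi_U$ and $\phi_G$ to be their respective classifying maps.

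Next I would check that the square commutes up to homotopy. Both $B\Delta\circ\phi_G$ and $Bf\circ\phi_U$ classify $(G\times G)$-principal bundles over $G\bq U$: the former extends the structure group of the $\Delta G$-bundle $(G\times G)/U$ along $\Delta:\Delta G\hookrightarrow G\times G$, and the latter extends the $U$-bundle $G$ along $\phi:U\to G\times G$. A direct calculation identifies each extension with the principal $(G\times G)$-bundle $G\times G\to G\bq U$ arising from the full $(\Delta G\times U)$-action, so the two compositions are homotopic as classifying maps.

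For the pullback property, I would compute the homotopy pullback of $B\Delta G \to BG\times BG$ along $Bf$ directly using the Borel construction. Modeling $B(G\times G) = (EG\times EG)/(G\times G)$ and $B\Delta G = (EG\times EG)/\Delta G$, the fibration $B\Delta G\to BG\times BG$ is the bundle with fiber $(G\times G)/\Delta G \cong G$ associated to the universal $(G\times G)$-bundle. Pulling back along $Bf$ replaces the universal bundle by $EU\times_U(G\times G)$, and the pullback of the fibration becomes $EU\times_U G$, where $U$ acts on $G$ through the biquotient action. Since this action is free and $EU$ is contractible, the natural map $EU\times_U G \to G/U = G\bq U$ is a homotopy equivalence, and it identifies the structure maps of the pullback with $\phi_U$ and $\phi_G$.

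The main obstacle is bookkeeping: one has to pin down the left/right conventions for the various actions, verify that the two extension-of-structure-group calculations really produce the same $(G\times G)$-bundle, and track these identifications consistently through the Borel construction. None of these steps is deep, but together they require the kind of careful diagram chase carried out in \cite{Es3}, which I would follow as a template.
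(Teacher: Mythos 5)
The paper does not supply a proof of this proposition --- it is quoted from Eschenburg \cite{Es3} --- so I am judging your proposal on its own merits. Your overall strategy is sound and closely parallels the standard argument: realize $G\bq U$ as $\Delta G\backslash(G\times G)/U$, extract $\phi_U$ and $\phi_G$ from the two partial quotients of the principal $(\Delta G\times U)$-bundle $G\times G\to G\bq U$, and use the Borel construction to identify the homotopy pullback of $B\Delta G\to B(G\times G)\leftarrow BU$ with $EU\times_U G\simeq G\bq U$.

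One misstatement is worth flagging. In your commutativity step you claim both extensions of structure group are "the principal $(G\times G)$-bundle $G\times G\to G\bq U$ arising from the full $(\Delta G\times U)$-action." But $G\times G\to G\bq U$ is a principal $(\Delta G\times U)$-bundle, not a principal $(G\times G)$-bundle --- the groups have different dimensions in general, and moreover there is no single homomorphism $\Delta G\times U\to G\times G$ of which both extensions are pushforwards: one extension uses the composite $\Delta G\times U\to\Delta G\hookrightarrow G\times G$ and the other uses $\Delta G\times U\to U\xrightarrow{f} G\times G$. What is actually true, and what the "direct calculation" must produce, is an explicit isomorphism between $(G\times G)\times_{\Delta G}\big((G\times G)/U\big)$ and $\big((G\times G)/\Delta G\big)\times_{U}(G\times G)$; writing both as $\big((G\times G)\times(G\times G)\big)\big/(\Delta G\times U)$ for the appropriate actions, the assignment $\big((a,b),(c,d)\big)\mapsto\big((ac,bd),(c,d)\big)$ intertwines the two relations and provides the needed $(G\times G)$-equivariant bundle isomorphism. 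Fortunately this gap is harmless: your Borel-construction computation of the homotopy pullback is a complete, self-contained argument. Once the pullback of the fiber bundle $B\Delta G\to B(G\times G)$ (with fiber $(G\times G)/\Delta G\cong G$) along $Bf$ is identified with $EU\times_U G$, with $U$ acting on $G$ through the biquotient action, freeness gives the equivalence $EU\times_U G\to G\bq U$, and homotopy-commutativity of the square follows automatically. So the proof goes through; your step on commutativity is superfluous and, as written, not quite right, but the argument as a whole is correct.
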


Using the fibration $G\rightarrow EG\rightarrow BG$ with $G = Sp(3)$, one sees that since $H^\ast(G) \cong \Lambda_\Z[x_3,x_7,x_{11}]$ with $\deg(x_i) = i$, then $H^\ast(BG) \cong \Z[\overline{x_3}, \overline{x_7}, \overline{x_{11}}]$ where $\deg(\overline{x_i})=i+1$ and $dx_i = \overline{x_i}$ in the spectral sequence for the fibration.

It is easily shown that in the spectral sequence for the fibration $G\rightarrow BG\rightarrow BG\times BG$ in Proposition \ref{commute}, $dx_i = \overline{x_i}\otimes 1-1\otimes \overline{x_i}.$  In particular, via naturality, computing the differentials in the spectral sequence for the biquotient is reduced to computing the map $Bf^\ast$ on cohomology.

The method for computing $Bf^\ast$ is due to Borel and Hirzebruch \cite{BH1}.  The idea is that if $T_U$ is a maximal torus in $U$, $T_G$ is a maximal torus in $G$, and $f:T_U\rightarrow T_G$, then there is an induced map $Bf^\ast:H^\ast(BT_G)\rightarrow H^\ast(BT_U)$ which is completely characterized by the weights of the representation $f:T_U\rightarrow T_G$.  We then identify $H^\ast(BG)$ as a subalgebra of $H^\ast(BT_G)$ and restrict $Bf^\ast$ to it.

More precisely, we first note that for a torus $T = T^n$ there is a natural isomorphism between $H^1(T)$ and $\operatorname{Hom}(\pi_1(T),\Z)$.  Moreover, if $\exp:\mathfrak{t}\rightarrow T$ denotes the exponential map, we can identify $\pi_1(T)$ with $\Gamma = \exp^{-1}(0)$.  This allows us to interpret roots and weights of a representation as elements of $H^1(T)$.  By using transgressions of generators of $H^1(T)$ as generators of $H^2(BT)$, we can interpret any weight as an element of $H^2(BT)$.  Also note that since the Weyl group of $G$ acts on $T$, it also acts on $H^\ast(BT)$.

Borel and Hirzebruch \cite{BH1} have shown:

\begin{theorem}\label{toruscomp}

Let $G$ be a compact Lie group with maximal torus $T$ and suppose $R$ is a ring with the property that $H^\ast(G;R)$ is an exterior algebra.  Then the map $i^\ast:H^\ast(BG;R)\rightarrow H^\ast(BT;R)$ induced from the inclusion $i:T\rightarrow G$ is injective, and the image consists of the Weyl group invariant elements of $H^\ast(BT;R)$.

\end{theorem}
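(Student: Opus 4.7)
The plan is to exploit the fibration $G/T \to BT \xrightarrow{p} BG$ induced by the inclusion $i\colon T\hookrightarrow G$, where $p = Bi$.  Since $H^\ast(G;R)$ is an exterior algebra on odd-degree generators, transgression in the universal bundle $G\to EG\to BG$ forces $H^\ast(BG;R)$ to be a polynomial algebra on even-degree generators, say of degrees $2d_1,\dots,2d_n$.  The Bruhat decomposition endows $G/T$ with a CW-structure consisting only of even-dimensional Schubert cells, so $H^\ast(G/T;R)$ is a free $R$-module concentrated in even degrees of total rank $|W|$.  Injectivity of $i^\ast$ then follows from a parity argument in the Serre spectral sequence of $G/T\to BT\to BG$: the $E_2$-page $H^p(BG;R)\otimes H^q(G/T;R)$ lives entirely in even total degrees, every differential vanishes for parity reasons, and Leray--Hirsch exhibits $H^\ast(BT;R)$ as a free $H^\ast(BG;R)$-module of rank $|W|$.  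In particular, $i^\ast$ embeds $H^\ast(BG;R)$ as the filtration-degree-zero summand.

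For Weyl invariance of the image, I would use the standard fact that inner automorphisms of $G$ act trivially on $BG$ up to homotopy.  For $n\in N_G(T)$ representing $w\in W$, the conjugation $c_n$ preserves $T$ and fits into a commutative square of classifying spaces whose bottom arrow $Bc_n\colon BG\to BG$ is homotopic to the identity.  Pulling back on cohomology gives $w^\ast\circ i^\ast = i^\ast$ for every $w\in W$, so $\operatorname{im}(i^\ast)\subseteq H^\ast(BT;R)^W$.

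The main obstacle is the reverse containment -- that every $W$-invariant class is hit.  Here the plan is a Poincar\'e-series comparison.  From the preceding freeness, the Poincar\'e series of $H^\ast(BT;R)$ factors as $P_{H^\ast(BG;R)}(t)\cdot P_{H^\ast(G/T;R)}(t)$, with $P_{H^\ast(G/T;R)}(t) = \prod_i (1-t^{2d_i})/(1-t^2)$, and $P_{H^\ast(BT;R)}(t) = 1/(1-t^2)^n$.  On the other hand, the Chevalley--Shephard--Todd theorem identifies $H^\ast(BT;R)^W$ as a polynomial algebra on $n$ generators whose degrees are again $2d_1,\dots,2d_n$; equivalently, $H^\ast(BT;R)^W$ has the same Poincar\'e series $\prod_i 1/(1-t^{2d_i})$ as $H^\ast(BG;R)$.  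Combined with the graded inclusion $\operatorname{im}(i^\ast)\subseteq H^\ast(BT;R)^W$, rank-counting in each degree forces equality.  The subtle point to monitor throughout is that the hypothesis ``$H^\ast(G;R)$ is exterior'' is precisely what sidesteps $|W|$-torsion obstructions in Chevalley's theorem and underwrites the parity-collapse of the Serre sequence; for rings $R$ where $H^\ast(G;R)$ carries torsion, both steps can fail, which is why the result is restricted to this class of coefficients.
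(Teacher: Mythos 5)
The paper does not prove this theorem; it is quoted directly from Borel--Hirzebruch, so there is no in-paper proof to compare against. Your proposal reconstructs the standard argument (essentially Borel's original proof from \emph{Sur la cohomologie des espaces fibr\'es principaux\ldots}): the fibration $G/T\to BT\to BG$, parity collapse of the Serre spectral sequence, Leray--Hirsch freeness, and the inner-automorphism argument for $W$-equivariance. Those parts are correct and are the right approach.

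The gap is in the surjectivity step. You invoke Chevalley--Shephard--Todd to conclude that $H^\ast(BT;R)^W$ is a polynomial algebra on generators of degrees $2d_1,\dots,2d_n$, but that theorem requires $R$ to be a field in which $|W|$ is invertible, since it rests on the averaging idempotent $\tfrac{1}{|W|}\sum_{w\in W}w$. The theorem as stated allows general coefficient rings, and the paper applies it with $R=\mathbb{Z}$ and $G=Sp(3)$, where $|W| = 48$ is not a unit. The hypothesis ``$H^\ast(G;R)$ is exterior'' rules out torsion in $H^\ast(G;R)$ but does not make $|W|$ invertible in $R$, so your closing remark misdiagnoses what the hypothesis buys you. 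Over $\mathbb{Z}$ one needs the integral version of Chevalley's theorem (Demazure, Pittie, Steinberg), or else a direct argument: your freeness statement shows $i^\ast$ embeds $H^\ast(BG;\mathbb{Z})$ into $H^\ast(BT;\mathbb{Z})^W$ as a subgroup of finite index in each degree (by the rational computation), and one must still rule out a nontrivial cokernel. For $Sp(n)$ this can be checked by hand --- the invariants of $(\mathbb{Z}/2)^n\rtimes S_n$ acting on $\mathbb{Z}[\overline{y}_1,\dots,\overline{y}_n]$ are exactly $\mathbb{Z}[\sigma_1(\overline{y}^2),\dots,\sigma_n(\overline{y}^2)]$, which is what the paper uses --- but your proof as written does not establish the general statement.

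A secondary, smaller gap: the ``equivalently'' that identifies the Shephard--Todd degrees of $W$ with the polynomial degrees of $H^\ast(BG;R)$ is not automatic. It requires the Chevalley--Solomon length-generating-function identity $\sum_{w\in W}t^{\ell(w)}=\prod_i(1+t+\cdots+t^{d_i-1})$ together with the Bruhat-cell computation $P_{H^\ast(G/T)}(t)=\sum_w t^{2\ell(w)}$; you should cite this, since otherwise the Poincar\'e-series matching is circular.
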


For appropriate rings $R$, this theorem identifies $H^\ast(BG)$ as a sub-algebra of $H^\ast(BT_G)$.  Thus, we may compute $Bf^\ast:H^\ast(BT_G)\rightarrow H^\ast(BT_U)$ and then restrict to $H^\ast(BG)$.

For $G= Sp(3)$, we use $R = \mathbb{Z}$ and choose the maximal torus $$T_G = \{\diag(\exp(iy_1), \exp(i y_2),\exp(i y_3)): y_i\in \mathbb{R} \}.$$  Then, Theorem \ref{toruscomp} gives that
$$H^\ast(BG) \cong \Z[\sigma_1(\overline{y}^2), \sigma_2(\overline{y}^2), \sigma_3(\overline{y}^2)]\subseteq \Z[\overline{y}_1,\overline{y}_2, \overline{y}_3],$$ where the notation $\sigma_i(\overline{y}^2)$ denotes the elementary symmetric polynomials in the squares of the $\overline{y}_j$ variables.

\

Singhof \cite{Si1} has shown how to use this to compute the Pontryagin classes of the tangent bundle of $G\bq U$.

\

\begin{theorem}(Singhof)\label{charclass}

Suppose $U\subseteq G\times G$ defines a free biquotient action, then the total Pontryagin class of the tangent bundle of $G\bq U$ is given as $$p(G\bq U) = \phi_G^\ast\big(\Pi_{\lambda \in \Delta^+_G}(1+\lambda^2)\big)\phi_U^\ast\big(\Pi_{\rho\in\Delta^+_U}(1+\rho^2)\big)^{-1}$$ where $\Delta^+_G$ denotes the positive roots of $G$ and where $\phi_G^\ast$ and $\phi_U^\ast$ are the maps induced on cohomology.

\end{theorem}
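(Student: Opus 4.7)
My plan is to compute $p(G\bq U)$ by identifying $T(G\bq U)$ as a virtual difference of adjoint-representation bundles pulled back along the two classifying maps, then evaluating Pontryagin classes of these bundles via the splitting principle on the maximal torus.

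Using the doubling trick, I realize $G\bq U \cong \Delta G\backslash (G\times G)/U$, so that $G\times G \to G\bq U$ is a principal $\Delta G\times U$ bundle. Left translation trivializes $T(G\times G) \cong (G\times G)\times(\mathfrak{g}\oplus\mathfrak{g})$, and a direct computation of the derivative of the action $(a,u)\cdot (g,h) = (agf_1(u)^{-1},\, ahf_2(u)^{-1})$ shows that $\Delta G$ acts trivially on the fiber while $U$ acts via $(Ad\circ f_1)\oplus(Ad\circ f_2)$. Writing $V_G$ and $V_U$ for the vector bundles on $BG$ and $BU$ associated respectively to the adjoint representations, the descended bundle satisfies
$$\tau := T(G\times G)/(\Delta G\times U) \cong \phi_U^*Bf_1^*(V_G) \oplus \phi_U^*Bf_2^*(V_G).$$

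Next, I identify the vertical subbundle $V\subset T(G\times G)$. The map $(\xi_a,\xi_u)\mapsto (Ad_{g^{-1}}\xi_a - df_1(\xi_u),\, Ad_{h^{-1}}\xi_a - df_2(\xi_u))$ trivializes $V$ as $(G\times G)\times(\mathfrak{g}\oplus\mathfrak{u})$ (this is a linear isomorphism precisely because the action is free), and a routine equivariance check shows that under this trivialization $\Delta G$ acts by adjoint on $\mathfrak{g}$ and trivially on $\mathfrak{u}$, while $U$ acts by adjoint on $\mathfrak{u}$ and trivially on $\mathfrak{g}$. Therefore $V/(\Delta G\times U) \cong \phi_G^*(V_G)\oplus\phi_U^*(V_U)$. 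The short exact sequence $0\to V\to T(G\times G)\to \pi^*T(G\bq U)\to 0$ on $G\times G$ descends to $0 \to V/(\Delta G\times U)\to \tau \to T(G\bq U)\to 0$ on $G\bq U$, yielding the multiplicative relation $p(\tau) = p\bigl(V/(\Delta G\times U)\bigr)\cdot p(T(G\bq U))$. Commutativity of the pullback diagram in Proposition \ref{commute} forces $Bf_i\circ\phi_U = \phi_G$ after identifying $B\Delta G\simeq BG$, so both summands of $\tau$ reduce to $\phi_G^*(V_G)$, and solving gives
$$p(G\bq U) = \phi_G^*\bigl(p(V_G)\bigr)\cdot\phi_U^*\bigl(p(V_U)\bigr)^{-1}.$$

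To finish, I compute $p(V_G)\in H^*(BG)$ via its restriction to $H^*(BT_G)$. The complexified adjoint representation of $G$ has weights $0$ with multiplicity $\mathrm{rank}(G)$ together with $\pm\alpha$ for each $\alpha\in\Delta^+_G$; pairing nonzero weights yields $p(V_G)|_{BT_G} = \prod_{\alpha\in\Delta^+_G}(1+\alpha^2)$, which is Weyl-invariant and by Theorem \ref{toruscomp} lifts uniquely to $H^*(BG)$. The identical argument gives $p(V_U) = \prod_{\rho\in\Delta^+_U}(1+\rho^2)$, completing the proof. The main obstacle is the bookkeeping in the middle paragraphs: verifying that the left-trivializations of $T(G\times G)$ and of $V$ are genuinely $(\Delta G\times U)$-equivariant for the claimed adjoint-type actions on the fibers. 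Once those identifications are nailed down, the collapse $Bf_i\circ\phi_U = \phi_G$ and the splitting-principle computation are entirely formal.
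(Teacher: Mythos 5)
The paper does not prove Theorem~\ref{charclass}; it is stated with the attribution ``(Singhof)'' and the citation \cite{Si1}, so there is no proof of record in the text to compare your argument against. Judged on its own, your proof is correct. The derivative of $(a,u)\cdot(g,h)=(agf_1(u)^{-1},\,ahf_2(u)^{-1})$ in the left trivialization does act on $\mathfrak{g}\oplus\mathfrak{g}$ by $Ad_{f_1(u)}\oplus Ad_{f_2(u)}$ (move $f_i(u)^{-1}$ past $\exp(tX)$), the orbit-vector parametrization of the vertical bundle is equivariant because $(a,u)\gamma(t)(a,u)^{-1}$ has tangent $Ad_{(a,u)}\xi$ at the identity of $\Delta G\times U$, and Proposition~\ref{commute} gives $\phi_U^\ast\circ Bf_i^\ast=\phi_G^\ast$ after identifying $B\Delta G\simeq BG$ via either projection. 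The cancellation $\phi_G^\ast p(V_G)^2\cdot\bigl(\phi_G^\ast p(V_G)\cdot\phi_U^\ast p(V_U)\bigr)^{-1}$ yields the claimed formula, and the weight computation of $p(V_G)$ is standard.

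Two remarks. First, the doubling trick is not actually needed here. Working directly with the principal $U$-bundle $\pi:G\to G\bq U$ and left-trivializing $TG$, the same derivative computation (with only the right factor $f_2(u)^{-1}$ to move past) shows $U$ acts on the fiber $\mathfrak{g}$ by $Ad\circ f_2$, so the exact sequence $0\to V\to TG\to\pi^\ast T(G\bq U)\to 0$ descends to $0\to\phi_U^\ast V_U\to\phi_U^\ast Bf_2^\ast V_G\to T(G\bq U)\to 0$ on $G\bq U$, and one application of Whitney plus $\phi_U^\ast Bf_2^\ast=\phi_G^\ast$ finishes; your doubled version introduces two copies of $\phi_G^\ast V_G$ that cancel, so the content is the same, and it does have the virtue of matching the geometric setup used elsewhere in the paper. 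Second, for general $G$, $U$ the Whitney formula $p(E\oplus F)=p(E)p(F)$ for real bundles holds only modulo $2$-torsion, since odd Chern classes of a complexified real bundle are merely $2$-torsion rather than zero; your derivation therefore proves the identity integrally only when the relevant cohomology is $2$-torsion-free, and otherwise modulo $2$-torsion. For the application at hand ($G=Sp(3)$, $U=Sp(1)^2$, with $H^4(G\bq U)\cong\mathbb{Z}$) this is vacuous, but it is worth flagging if you intend the argument to stand in for Singhof's in full generality.
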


In the case of $G= Sp(n)$, if $\lambda_i$ is the linear functional defined on $\mathfrak{t}_G$ by $\lambda_i(\diag(a_1,...,a_n)) = a_i$, then the positive roots are given by $\lambda_i + \lambda_j$ for $1\leq i\leq j\leq n$ and $\lambda_i - \lambda_j$ for $1 < i < j < n$.

\section{\texorpdfstring{Classification of effectively free biquotient actions of $Sp(1)\times Sp(1)$ on $Sp(3)$}{Classification of effectively free Sp(1)Sp(1) biquotient actions on Sp(3)}}
\label{Class}

\renewcommand*{\theHtable}{\arabic{table}} 

In this section, we classify all effectively free biquotient actions of $U = Sp(1)^2$ on $G = Sp(3)$.

\begin{theorem}\label{biqclass}  Up to equivalence, there are precisely $4$ homogeneous and $15$ inhomogeneous effectively free biquotient actions of $U= Sp(1)^2$ on $G = Sp(3)$.  Table \ref{table:emb} lists the image homomorphism $f:U\rightarrow G\times G$, with $(p,q)\in U$, defining these actions.

\begin{table}[H]

\caption{All biquotients of the form $Sp(3)\bq Sp(1)^2$}\label{table:emb}

\begin{center}

\begin{tabular}{|c|c|c|}

\hline

Name & Left factor image & Right factor image \\

\hline

$M_1$ & $I$ & $\diag(p,p,q)$\\

$M_2$ & $I$ & $\diag(p,q,1)$\\

$M_3$ & $I$ & $\diag(p,\phi_3(q) )$\\

$M_4$ & $I$ & $\diag(p,p,p)\cdot q'$\\

\hline

$N_1$ & $\diag(p,p,1)$ & $\diag(1,1,q)$\\

$N_2$ & $\diag(p,p,p)$ & $\diag(1,1,q)$\\

$N_3$ & $\diag(p,p,p)$ & $\diag(q,q,1)$\\

$N_4$ & $\diag(p,p,p)$ & $\diag(1,p,q)$\\

$N_5$ & $\diag(q,q,p)$ & $\diag(1,q,1)$\\

$N_6$ & $\diag(p,p,q)$ & $\diag(q,q,1)$\\

$N_7$ & $\diag(p,p,p)$ & $\diag(1,\phi_3(q))$\\

$N_8$ & $\diag(1,1,p)$ & $\diag(q,\phi_3(q))$\\

$N_9$ & $\diag(\phi_3(p),1)$ & $\diag(q,\phi_3(q))$\\

$N_{10}$ & $\diag(\phi_5(p))$ & $\diag(q,1,1)$ \\

$N_{11}$ & $\diag(\phi_3(q),1)$ & $\diag(p,p,p)\cdot q'$\\

$N_{12}$ & $\diag(p,1,1)$ & $\diag(q,q,q)\cdot p'$\\

$N_{13}$ & $p'$ & $\diag(p,q,q)$\\

\hline

$O_1$ & $p'$ & $\diag(q,q,q)$\\

$O_2$ & $p'$ & $\diag(q,1,1)$\\

\hline

\end{tabular}

\end{center}

\end{table}

\end{theorem}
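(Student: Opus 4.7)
The overall approach is to exploit the machinery of Section 2: by Mal'cev's theorem (Theorem \ref{Mal'cev}) together with Proposition \ref{equiv}, classifying biquotient actions of $U = Sp(1)^2$ on $G = Sp(3)$ up to equivalence reduces to classifying, up to automorphisms of $U$ and interchange of the two factors of $G\times G$, pairs $(\rho_1,\rho_2)$ of three-dimensional symplectic (equivalently, $6$-dimensional complex symplectic) representations of $U$. So the plan is three steps: enumerate the candidate representations $\rho_i$, assemble them into pairs and test effective freeness via Proposition \ref{free}, and finally quotient by the obvious symmetries to obtain the list in Table \ref{table:emb}.

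First, I would enumerate the three-dimensional symplectic representations of $Sp(1)^2$. Recall that every irreducible representation of $Sp(1)^2$ is an outer tensor product $\psi_m \otimes \psi_n$ of irreducibles of the two $Sp(1)$ factors, of complex dimension $mn$, which is symplectic iff exactly one of $m,n$ is even and orthogonal otherwise. By the characterization after Proposition \ref{sp1irrep}, a representation of $U$ is symplectic iff every orthogonal irreducible constituent appears with even multiplicity. Listing the partitions of $6$ into $mn$'s subject to this multiplicity constraint yields a short, explicit menu of $\rho$'s: among them one finds e.g.\ $\psi_2\otimes 1$ (giving $\diag(p,p,p)$ as a symplectic summand), $1\otimes\psi_2$ direct summed with trivials, the self-paired orthogonal $2(\psi_3\otimes 1)$ (giving $\diag(\phi_3(p),1)$-type embeddings), the $3$-dimensional irreducible $\psi_2\otimes\psi_3$ (giving $\diag(\phi_5(p))$-type embeddings after identifying parameters), and so on; the paper's notation $\phi_3,\phi_5,p',q'$ then corresponds to these irreducible blocks.

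Next, for each ordered pair $(\rho_1,\rho_2)$ on that menu, I would decide effective freeness. Since every element of $U$ is conjugate into the maximal torus $T_U$, by the remark after Proposition \ref{free} it suffices to test the criterion on $T_U$; and since conjugacy of diagonal elements of $Sp(3)$ is controlled by reordering together with complex conjugation of entries, the Proposition \ref{free} condition becomes a finite combinatorial condition on the weights of $\rho_1$ and $\rho_2$. Many pairs are quickly eliminated because the diagonal weights of $\rho_1$ and $\rho_2$ can be matched by a reordering/conjugation with $(u_1,u_2)\notin Z(G)$; the survivors are exactly the candidate rows of Table \ref{table:emb}.

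Finally, I would quotient by the symmetries that yield equivalent biquotients: precomposition with $\operatorname{Aut}(Sp(1)^2)$ (i.e.\ swapping the two $Sp(1)$ factors of $U$ and applying $Sp(1)$-automorphisms, which just relabel $p\leftrightarrow q$ and allow quaternionic conjugation), and the involution $(f_1,f_2)\mapsto(f_2,f_1)$, which via the identification $G\bq U\cong \Delta G\backslash (G\times G)/U$ gives a diffeomorphic quotient. Separating those effectively free classes for which $\rho_1$ or $\rho_2$ is trivial (the homogeneous cases $M_i$) from the rest (the $N_i$ and $O_i$), and verifying that the surviving $N$-type representatives in the table are pairwise inequivalent, yields the claimed $4$ homogeneous and $15$ inhomogeneous actions. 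The main obstacle is not any single deep step but rather the bookkeeping: ensuring the list of $\rho_i$'s is complete, correctly executing the weight computation of Proposition \ref{free} for each pair (especially when an orthogonal $\psi_3$ factor is present, since conjugation within $\phi_3(Sp(1))\subset SO(3)\subset Sp(3)$ produces many conjugacies that must be accounted for), and proving that no two rows of the table define equivalent actions — a task which is finite but requires care to organize.
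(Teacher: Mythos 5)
Your overall strategy is the same as the paper's: reduce via Mal'cev and Proposition \ref{equiv} to classifying pairs of $3$-dimensional symplectic representations of $Sp(1)^2$ up to the obvious symmetries, test effective freeness on the maximal torus using Proposition \ref{free} and the fact that diagonal matrices in $Sp(3)$ are conjugate exactly when their entries agree up to permutation and complex conjugation, and then weed out duplicates. The one substantive organizational device you omit is the paper's intermediate step: it first enumerates and classifies the effectively free biquotient actions of a single $Sp(1)$ on $Sp(3)$ (Proposition \ref{sp1class}, finding $17$), and then uses that the restriction of an effectively free $Sp(1)^2$ action to each $Sp(1)$ factor and to the diagonal $Sp(1)$ must itself be effectively free (Corollary \ref{restest}). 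This cuts the raw $484$ pairs (equivalently $\binom{22}{2}=231$ unordered ones) down to $18$ candidates requiring explicit checks, whereas your plan implies directly checking all pairs. Both routes work, but the restriction filter is what makes the bookkeeping humane, and is where essentially all the labor-saving lives. One small slip in your illustrative examples: in your notation $\psi_2\otimes\psi_3$ is a $6$-dimensional symplectic irreducible corresponding to the paper's $\phi_{12}$ with torus image $\diag(zw^2,z\overline{w}^2,z)$ — that is, the $\diag(p,p,p)\cdot q'$ embeddings appearing in $M_4$, $N_{11}$, $N_{12}$ — not to $\diag(\phi_5(p))$, which instead comes from the degree-$6$ irreducible of a single $Sp(1)$ factor (the paper's $\phi_{50}$). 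This does not affect the validity of your method, but it would matter when actually carrying out the enumeration.
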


In Table \ref{table:emb}, $q'$ denotes the image of $q$ under the canonical double cover $Sp(1)\rightarrow SO(3)$, where we view $SO(3)$ as a subgroup of $Sp(3)$.  The notation $\phi_i$ refers to the unique irreducible complex $i+1$-dimensional representation $\phi_i:Sp(1)\rightarrow SU(i+1)$, whose image, when $i$ is odd, is conjugate to a subgroup of the standard $Sp\left( \frac{i+1}{2}\right)\subseteq SU(i+1)$.

The $M$ biquotients are all homogeneous.  The $N$ biquotients are those where $U$ is isomorphic to $Sp(1)^2$, whereas for the $O$ biquotients, as well as $M_4$, $U$ is isomorphic to $Sp(1)\times SO(3)$.

As mentioned in the in Section 2, determining whether or not an action is effectively free, as well as determining the topology of these examples requires knowledge of the image of the maximal torus.  We, therefore, record these images in Table \ref{table:freelist}.

\begin{table}[h]

\caption{Image of the maximal torus of $Sp(1)^2$ in $Sp(3)^2$ for all effectively free actions}\label{table:freelist}

\begin{center}

\begin{tabular}{|c|c|c|}

\hline

Name & Left factor image & Right factor image\\

\hline

$M_1$ & $I$ & $\diag(z,w,w)$\\

$M_2$ & $I$ & $\diag(z,1,w)$\\

$M_3$ & $I$ & $\diag(z, z^3, w)$\\

$M_4$ & $I$ & $\diag(z\, w^2, z\, \overline{w}^2, z)$ \\

\hline

$N_1$ & $\diag(z,z,1)$ & $\diag(1,1,w)$\\

$N_2$ & $\diag(z,z,z)$ & $\diag(1,1,w)$\\

$N_3$ & $\diag(z, z, z)$ & $\diag(w,w,1)$\\

$N_4$ & $\diag(z,z,z)$ & $\diag(1,z,w)$\\

$N_5$ & $\diag(w,w,z)$ & $\diag(1,w,1)$\\

$N_6$ & $\diag(z,z,w)$ & $\diag(w,w,1)$\\

$N_7$ & $\diag(z,z,z)$ & $\diag(1,w, w^3)$\\

$N_8$ & $\diag(1,1,z)$ & $\diag(w,w,w^3)$\\

$N_9$ & $\diag(z,z^3,1)$ & $\diag(w,w^3, w)$\\

$N_{10}$ & $\diag(z,z^3, z^5)$ & $\diag(w,1,1)$\\

$N_{11}$ & $\diag(w,w^3 ,1)$ & $\diag(z\, w^2, z\, \overline{w}^2 , z)$\\

$N_{12}$ & $\diag(z,1,1)$ & $\diag(wz^2, w\overline{z}^2,w)$\\

$N_{13}$ & $\diag(z^2,\overline{z}^2,1)$ & $\diag(z,w,w)$\\

\hline

$O_1$ & $\diag(z^2, \overline{z}^2, 1)$ & $\diag(w, w, w)$\\

$O_2$ & $\diag(z^2, \overline{z}^2,1)$ & $\diag(w,1,1)$\\

\hline

\end{tabular}

\end{center}

\end{table}

To prove Theorem \ref{biqclass}, we begin by classifying all homomorphisms $U\rightarrow G$, from which one easily shows there are $484$ homomorphisms $U\rightarrow G^2$.  To determine which give rise to effectively free biquotient actions, we first classify all effectively free biquotient actions of $Sp(1)$ on $G$, finding precisely $17$.  Using symmetry considerations and the fact that the restriction of an effectively free action to a subgroup is still effectively free, we are then able to reduce our original $484$ homomorphisms down to a more manageable list.

With $\phi_i:Sp(1)\rightarrow SU(i+1)$ denoting the unique complex $i+1$-dimensional irreducible representation of $Sp(1)$, we let $\phi_{ij} = \phi_i\otimes \phi_j$ be the outer tensor product of $\phi_i$ and $\phi_j$.  It follows from Proposition \ref{sp1irrep} that $\phi_{ij}$ is orthogonal when $i$ and $j$ have the same parities and is symplectic otherwise.  As mentioned after Proposition \ref{sp1irrep}, Proposition \ref{symp} then implies that a sum of representations of $Sp(1)$ (respectively $U$) is symplectic if and only if each orthogonal $\phi_i$ (respectively $\phi_{ij}$) appears with even multiplicity.  From these observations it is easily seen that, up to equivalence, there are precisely $8$ homomorphisms $Sp(1)\rightarrow G$ given in Table \ref{table:sp1homo}.  Similarly, up to equivalence, there are $22$ homomorphisms $\rho:U\rightarrow G$, recorded in Table \ref{table:partition}.

For example, the $6$-dimensional representation of $U$, $\phi_{11} + \phi_{10}$ is not a symplectic representation, i.e., the image is not a subgroup of $Sp(3)\subseteq SU(6)$ because the orthogonal representation $\phi_{11}$ appears with odd multiplicity.  On the other hand, the representation $2\phi_{00} + \phi_{30}$ is symplectic, because $\phi_{30}$ is symplectic and $\phi_{00}$, though orthogonal, has even multiplicity.

\begin{table}[ht]

\caption{Homomorphisms from $Sp(1)$ into $G$}\label{table:sp1homo}

\begin{center}

\begin{tabular}{|c|c|}

\hline

Representation & Image of $T_{Sp(1)}$\\

\hline

$ 6\phi_0$ & \diag(1,1,1)\\

$4\phi_0 + \phi_1$ & $\diag(z,1,1)$\\

$2\phi_0 + 2\phi_1$ & $\diag(z,z,1)$\\

$3\phi_1$ & $\diag(z,z,z)$\\

$2\phi_0 + \phi_3$ & $\diag(1,z,z^3)$\\

$\phi_1 + \phi_3$ & $\diag(z,z,z^3)$\\

$\phi_5$ & $\diag(z,z^3,z^5)$\\

$2\phi_2$ & $\diag(z^2, \overline{z}^2,1)$\\

\hline

\end{tabular}

\end{center}

\end{table}

\begin{table}[ht]

\caption{Homomorphisms from $U$ into $G$}\label{table:partition}

\begin{center}

\begin{tabular}{|c|c||c|c|}

\hline

Representation & Image of $T_U$ & Representation & Image of $T_U$\\

\hline

$6\phi_{00}$ & $\diag(1,1,1)$& $2\phi_{00} + \phi_{03}$ & $\diag(w,w^3,1)$\\

$4\phi_{00} +\phi_{10}$ & $\diag(z,1,1)$ & $2\phi_{20}$ & $\diag(z^2, \overline{z}^2, 1)$\\

$4\phi_{00} + \phi_{01}$ & $\diag(w,1,1)$ & $2\phi_{02}$ & $\diag(w^2, \overline{w}^2, 1)$\\

$2\phi_{00} + 2\phi_{10}$ & $\diag(z,z,1)$ & $3\phi_{10}$ & $\diag(z,z,z)$\\

$2\phi_{00} + 2\phi_{01}$ & $\diag(w,w,1)$ & $3\phi_{01}$ & $\diag(w,w,w)$\\

$2\phi_{00} + \phi_{01} + \phi_{10}$ & $\diag(z,w,1)$ & $2\phi_{10} + \phi_{01}$ & $\diag(z,z,w)$\\

$\phi_{10} + \phi_{30}$ & $\diag(z,z,z^3)$ & $2\phi_{01} + \phi_{10}$ & $\diag(w,w,z)$\\

$\phi_{01} + \phi_{03}$ & $\diag(w,w,w^3)$ & $\phi_{50}$ & $\diag(z,z^3,z^5)$\\

$\phi_{10} + \phi_{03}$ & $\diag(z,w,w^3)$ & $\phi_{05}$ & $\diag(w,w^3,w^5)$\\

$\phi_{01} + \phi_{30}$ & $\diag(w,z,z^3)$ & $\phi_{12}$ & $\diag(z w^2, z \overline{w}^2, z)$\\

$2\phi_{00} + \phi_{30}$ & $\diag(z, z^3,1)$ & $\phi_{21}$ & $\diag(w z^2, w \overline{z}^2, w)$\\

\hline

\end{tabular}

\end{center}

\end{table}

To classify which pairs of these homomorphisms give rise to effectively free actions, we first classify which pairs of homomorphisms $Sp(1)\rightarrow G^2$ give rise to effectively free actions.

\begin{proposition}\label{sp1class}

Suppose $f=(f_1,f_2):Sp(1)\rightarrow G^2$ with $f_1$ nontrivial.  Then $f$ induces an effectively free biquotient action of $Sp(1)$ on $G$ if and only if either $f_2$ is trivial or, up to interchanging $f_1$ and $f_2$, $(f_1,f_2)$ is equivalent to one of the pairs in Table \ref{table:sp1pairs}.

\begin{table}[ht]

\caption{Homomorphisms $Sp(1)\rightarrow G^2$ defining inhomogeneous effectively free actions}\label{table:sp1pairs}

\begin{center}

\begin{tabular}{|c|c|}

\hline

$(4\phi_0 + \phi_1, 2\phi_0 + 2\phi_1)$ & $(4\phi_0 + \phi_1, 3 \phi_1)$\\  $(4\phi_0 + \phi_1, \phi_1 + \phi_3)$ &  $(4\phi_0 + \phi_1, \phi_5)$\\  $(4\phi_0 + \phi_1, 2\phi_2)$ & $(3\phi_1,2\phi_0 + 2\phi_1) $ \\ $(3\phi_1, 2\phi_0 + \phi_3)$ & $(3\phi_1, 2\phi_2)$ \\$ (\phi_1 + \phi_3, 2\phi_0 + \phi_3)$ & $(2\phi_2, 2\phi_0 + \phi_3)$ \\

\hline

\end{tabular}

\end{center}

\end{table}

\end{proposition}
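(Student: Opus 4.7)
My plan is to reduce everything to a finite check on the maximal torus using Proposition \ref{free}. Writing $T\subseteq Sp(1)$ for the maximal torus, the action is effectively free if and only if, for every $z\in T$, the diagonal matrices $D_1(z):=f_1(z)$ and $D_2(z):=f_2(z)\in T_G$ satisfy the following: whenever their multisets of diagonal entries agree modulo complex conjugation (the conjugacy criterion in $Sp(3)$), one has $D_1(z)=D_2(z)\in Z(Sp(3))=\{\pm I\}$. This is an entirely explicit condition once one knows $D_1(z)$ and $D_2(z)$, both of which are tabulated.

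By Theorem \ref{Mal'cev}, conjugacy classes of homomorphisms $Sp(1)\to Sp(3)$ correspond bijectively to equivalence classes of symplectic $6$-dimensional representations of $Sp(1)$, giving the $8$ entries of Table \ref{table:sp1homo}. Since the swap $(f_1,f_2)\leftrightarrow(f_2,f_1)$ is realized by the equivariant diffeomorphism $g\mapsto g^{-1}$, and the case $f_2$ trivial is effectively free automatically (any $u_2=e$ conjugate in $G$ to $u_1$ forces $u_1=e\in Z(G)$), it suffices to examine unordered pairs of nontrivial homomorphisms, of which there are $\binom{7}{2}+7=28$. The seven diagonal pairs $f_1=f_2$ are eliminated immediately: $D_1(z)=D_2(z)$ is trivially conjugate to itself but lies in $\{\pm I\}$ only for $z=\pm 1$. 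For each of the $21$ remaining unordered pairs I would then read off the torus images directly from Table \ref{table:sp1homo} and compare.

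The main obstacle is handling accidental conjugacy at roots of unity. For example, the pair $(3\phi_1,\phi_5)$, whose images are $\diag(z,z,z)$ and $\diag(z,z^3,z^5)$, is safe at generic $z$, but at $z=i$ the second matrix becomes $\diag(i,-i,i)$, whose multiset equals $\{i,i,i\}$ modulo complex conjugation; since $\diag(i,i,i)\ne\pm I$, the pair is excluded. For each candidate pair, the condition that entries of $D_2(z)$ match entries of $D_1(z)$ modulo conjugation translates into finitely many equations of the form $z^k=z^{\pm \ell}$ with $k,\ell$ drawn from the exponent set $\{1,2,3,5\}$ appearing in Table \ref{table:sp1homo}, yielding finitely many candidate $z\in T$ to test. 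The verification is mechanical given the explicit torus images, but each of the $21$ off-diagonal pairs must be examined individually to ensure that no root-of-unity coincidence is overlooked; the ten pairs listed in Table \ref{table:sp1pairs} are precisely those that survive this check.
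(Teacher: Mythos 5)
Your approach is essentially the same as the paper's: reduce to the freeness criterion of Proposition \ref{free} on the maximal torus, note the swap symmetry and the trivial-factor case, and then run the $\binom{7}{2}=21$ off-diagonal pairs through the conjugacy test (multiset equality up to reordering and complex conjugation) for diagonal matrices in $Sp(3)$. The example you work out, $(3\phi_1,\phi_5)$ at $z=i$, is correct and of the same flavor as the two checks displayed in the paper.
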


\begin{proof}

Recall that a biquotient action defined by $(f_1, f_2)$ is effectively free if and only if for all $t \in T_{Sp(1)}$, if $f_1(t)$ is conjugate to $f_2(t)$, then $f_1(t) = f_2(t) \in Z(G)$. It immediately follows that $f_1$ and $f_2$ must be distinct, and that if either $f_1$ or $f_2$ is the trivial homomorphism, then the action is automatically free, which accounts for the $7$ homogeneous examples.

Recalling that two diagonal matrices in $Sp(3)$ are conjugate if and only if their entries are the same up to reordering and complex conjugation, the remaining $\binom{7}{2} = 21$ pairs of homomorphisms may be easily checked.  We present a few of the calculations.

The homomorphism $(4\phi_0 + \phi_1, 3\phi_1)$ gives rise to an effectively free action since the only way $\diag(z, z, z)$ and $\diag(z, 1, 1)$ can be conjugate is if $z=1$.

On the other hand, the homomorphism $(2\phi_0+\phi_3, 4\phi_0 + \phi_1)$ does not induce an effectively free action since the two matrices $\diag(z, z^3, 1) $ and $ \diag(z, 1, 1)$ are conjugate when $z$ is a nontrivial third root of unity.

\end{proof}

Since the restriction of any effectively free action to a subgroup is effectively free, we have the following simple corollary.

\begin{corollary}\label{restest} Suppose $f=(f_1,f_2):U = Sp(1)^2\rightarrow G^2$ defines an effectively free action of $U$ on $G$.  Then the restriction of $f$ to both factors of $U$, as well as to the diagonal $Sp(1)$ in $U$, must be equivalent to the homomorphisms in Proposition \ref{sp1class}.

\end{corollary}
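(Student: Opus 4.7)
The proof is essentially immediate from the definition of effective freeness and the previous proposition, so the plan is to spell out these two ingredients carefully. The key observation is that Proposition \ref{free} characterizes effective freeness pointwise: the action of $U$ on $G$ via $f=(f_1,f_2)$ is effectively free if and only if, for every $u\in U$, whenever $f_1(u)$ is conjugate to $f_2(u)$ in $G$ we must have $f_1(u)=f_2(u)\in Z(G)$. Since this is a universally quantified statement over $U$, it automatically remains true when $u$ is restricted to range over any subgroup $H\subseteq U$. In other words, the restriction of an effectively free action to a subgroup is effectively free.

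Next, I would apply this restriction principle to the three standard copies of $Sp(1)$ inside $U=Sp(1)^2$: the two coordinate factors $Sp(1)\times\{e\}$ and $\{e\}\times Sp(1)$, and the diagonal subgroup $\Delta Sp(1)=\{(p,p):p\in Sp(1)\}$. For each inclusion $\iota:Sp(1)\hookrightarrow U$, the composition $f\circ\iota:Sp(1)\rightarrow G^2$ is a homomorphism whose associated biquotient action on $G$ coincides with the restriction of the $U$-action to $\iota(Sp(1))$. By the preceding paragraph, each such restriction is an effectively free action of $Sp(1)$ on $G$.

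Finally, I would invoke Proposition \ref{sp1class}, which classifies all effectively free homomorphisms $Sp(1)\rightarrow G^2$: each must either have a trivial left or right factor, or be equivalent (up to possibly swapping the two factors) to one of the ten pairs listed in Table \ref{table:sp1pairs}. Applying this conclusion to each of the three restrictions yields the statement of the corollary.

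There is essentially no obstacle here; the content is entirely in noticing that effective freeness descends to subgroups, which in turn is a trivial consequence of the pointwise criterion in Proposition \ref{free}. The real work of the corollary has already been done in Proposition \ref{sp1class}, and its value lies in giving a practical tool for pruning the list of $484$ candidate homomorphisms $U\rightarrow G^2$ down to a manageable subset.
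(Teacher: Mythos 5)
Your proposal is correct and matches the paper's approach exactly: the paper simply notes that the restriction of an effectively free action to a subgroup is effectively free and immediately deduces the corollary from Proposition \ref{sp1class}. You have merely spelled out the (routine) justification of that restriction principle via Proposition \ref{free}.
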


Intuitively, this means that if $f=(f_1,f_2)$, with both $f_i$ in Table \ref{table:partition}, defines an effectively free action, then setting $z=1$, $w=1$, or $z=w$ must result in a pair of homomorphisms from Proposition \ref{sp1class}.

We can now prove Theorem \ref{biqclass}.

\begin{proof}

Let $f = (f_1,f_2):U\rightarrow G^2$ be any of the $484$ pairs of homomorphisms coming from Table \ref{table:partition}.  As remarked earlier, $(f_1,f_2)$ and $(f_2,f_1)$ define equivalent actions and the action defined by $(f_i,f_i)$ is never effectively free, so we reduce the number of pairs to check down to $\binom{22}{2} = 231$.  Of these, up to interchanging $z$ and $w$, only four entries of Table \ref{table:partition} contain both a $z$ and $w$, leading to the four homogeneous biquotients.  Therefore, we may assume that neither $f_1$ nor $f_2$ is trivial, which reduces the number to $210$ pairs.

We also note that interchanging $z$ and $w$, which corresponds to interchanging indices of the $\phi_{ij}$s, gives equivalent actions.  Taking this symmetry into account reduces the number of pairs to check down to $121$.  Since we have already classified effectively free actions of $Sp(1)$, we discard any pairs which do not have both a $z$ and a $w$ in them, leading to $89$ pairs.  Finally, Corollary \ref{restest} reduces this number down to $18$, of which only 3 do not give rise to effectively free actions.  We now provide computations for these 3, and also some prototypical computations for 2 that do give rise to effectively free actions.

The action given by $(\diag(z, 1, 1), \diag(zw^2, z\overline{w}^2, z))$ is not effectively free because setting $z=-1$ and $w=i$ gives conjugate matrices, neither of which is $\pm I$.

The action given by $(\diag(z, z^3, 1), \diag(w^2, \overline{w}^2, 1))$ is not effectively free because setting $z=i$ and $w=\sqrt{i}$ gives conjugate matrices, neither of which is $\pm I$.

Finally, the action given by $(\diag(z, z^3, 1), \diag(zw^2, z\overline{w}^2, z))$ is not effectively free because setting $z$ to be a fifth root of unity and $w=\sqrt{z}$ gives conjugate matrices, neither of which is $\pm I$.

On the other hand, the action given by $(\diag(z, 1, 1), \diag(z, z, w))$ is effectively free because if two such matrices are conjugate, we must have $z=1$, which then forces $w=1$.

Additionally, the action given by $(\diag(z, z^3, z), \diag(w, w^3, 1))$ is effectively free. If two such matrices are conjugate, then either $z=1$ or $z^3 = 1.$  If $z=1$, clearly both matrices must be the identity, so we may assume $z^3 = 1,$ and hence either $z=w$ or $z = \overline{w}$.  By replacing $w$ with $\overline{w}$, we obtain the same subgroup, hence the same action on $G$, so we can focus on the equation $z=w$.  By Proposition \ref{sp1class}, this action is free.

Continuing in this manner, it is easily seen that the other $13$ entries of Table \ref{table:freelist} give rise to effectively free actions.

\end{proof}

\section{New examples with quasi-positive curvature}
\label{curvature}

We now show the manifolds $N_1$ through $N_8$ in Table \ref{table:freelist} admit metrics of quasi-positive curvature.  

We begin with a bi-invariant metric $g_0(X,Y) = -\operatorname{Re}\operatorname{Tr}(XY)$ on $G = Sp(3)$ and perform a Cheeger deformation in the direction of $K = Sp(1)\times Sp(2)$, block embedded.  We let $g_1$ denote this new metric and define $\Phi$ as the linear map relating $g_1$ with the bi-invariant metric: $g(X, \Phi Y) = g_1(X,Y)$ for all $X,Y\in \mathfrak{sp}(3) = \mathfrak{g}$, the Lie algebra of $G$.  We decompose $\mathfrak{g}$ as $\mathfrak{g} = \mathfrak{k} \oplus \mathfrak{p}$, orthogonal with respect to a bi-invariant metric.  For a vector $X\in \mathfrak{g}$, we write $X = X_\mathfrak{k} + X_{\mathfrak{p}}$.

Using Wilking's doubling trick, we construct a new metric $g_2$ on $G$ as the submersion metric from the natural projection $(G\times G, g_1 + g_1)\rightarrow \Delta G\backslash G\times G\cong G$.  For any of the biquotients $N_1$ through $N_8$, the corresponding subgroup $U$ is a subgroup of $K\times K$, and hence acts isometrically on $(G,g_2)$.  This induces a metric on $G\bq U$ which we will show, using Corollary \ref{eqns}, is quasi-positively curved.  In fact, we will find points of quasi-positive curvature of the form $p = \begin{bmatrix} \cos \theta & \sin \theta & 0 \\ -\sin\theta & \cos\theta & 0 \\ 0 & 0 & 1\end{bmatrix}$.  For $N_1$ though $N_6$ will we will find an explicit description of the allowable $\theta$s, while for $N_7$ and $N_8$ we show that all $\theta$s with $0 < \theta < \frac{\pi}{4}$ define points of positive curvature..

As a first step, we note that if $X,Y \in \mathfrak{g}$ with $[X_\mathfrak{p}, Y_\mathfrak{p}] = 0$, then, $X_{\mathfrak{p}}$ and $Y_{\mathfrak{p}}$ are linearly dependent.  This follows from interpreting $\mathfrak{p}$ as the tangent space to $G/K = \mathbb{H}P^2$, which has positive sectional curvature with the induced metric.  By subtracting an appropriate multiple of $Y$ from $X$, we can and will assume in any application of Corollary \ref{eqns}, that $X_\mathfrak{p} = 0$.

Because the homomorphisms defining the $U$ action on $G$ for $N_1$ through $N_6$ involve tensor products of only the representations $\phi_1$ and $\phi_2$, while those defining $N_7$ and $N_8$ involve $\phi_3$, we will separate the rest of the argument into two subsections.

\subsection{\texorpdfstring{Quasi-positive curvature on $N_1, \ldots, N_6$}{Quasi-positive curvature on N1 to N6}}
\label{firstquasi}

We begin with the observation that for any point $p$ of the above form and for any subgroup $U$ for the biquotients $N_1$ through $N_6$, $Ad_p u_1 = u_1$ for any $(u_1,u_2)\in\mathfrak{u}\subseteq \mathfrak{g}\oplus\mathfrak{g}$.  This leads to the following lemma.

\begin{lemma}\label{lem1}  Suppose $X\in\mathfrak{g}$ satisfies condition 1 of Corollary \ref{eqns}.  Then $X_{33} = 0$ and $X_{11} = - X_{22}$ for $N_1, N_2, N_3,$ and  $N_6$ while $X_{11} = 0$ for $N_4$ and $N_5$.

\end{lemma}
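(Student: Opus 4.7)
The plan is to combine the observation immediately preceding the lemma --- that $\operatorname{Ad}_p u_1 = u_1$ for every $u_1$ arising from the left factor for $N_1,\ldots,N_6$ --- with the explicit form of the bi-invariant metric to reduce condition~1 of Corollary~\ref{eqns} to an elementary orthogonality computation on the diagonal of $X$.

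To justify the $\operatorname{Ad}_p$-invariance of $u_1$, I would observe that the given $p$ has upper-left $2\times 2$ block equal to a real rotation matrix and lower-right $1\times 1$ block equal to $1$. Inspection of Table~\ref{table:emb} shows that in each of the six cases the differential of the left factor has block form $\diag(\beta I_2, \gamma)$ with $\beta,\gamma \in \mathfrak{sp}(1)$; since a real matrix commutes with the scalar quaternionic matrix $\beta I_2$ and fixes $\gamma$ trivially, we get $\operatorname{Ad}_p u_1 = u_1$. Condition~1 then reads $g_0(X, u_1-u_2) = 0$ for every $(u_1,u_2)\in\mathfrak{u}$. Because $g_0(Y,Z) = -\Re\operatorname{Tr}(YZ)$ on $\mathfrak{sp}(3)$ is the standard Frobenius pairing and $u_1-u_2$ is always diagonal with pure-imaginary entries $\alpha_1,\alpha_2,\alpha_3$, the condition becomes $\sum_i \Re(X_{ii}\overline{\alpha_i}) = 0$, to hold for every tuple $(\alpha_1,\alpha_2,\alpha_3)$ arising as $(u_1,u_2)$ ranges over $\mathfrak{u}$.

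The remaining step is to list these tuples and extract the constraints. Parameterising the two $Sp(1)$ factors by $A, C \in \mathfrak{sp}(1)$, I would read $u_1 - u_2$ directly off Table~\ref{table:emb}: $\diag(A,A,-C)$ for $N_1$, $\diag(A,A,A-C)$ for $N_2$, $\diag(A-C,A-C,A)$ for $N_3$, $\diag(A,0,A-C)$ for $N_4$, $\diag(C,0,A)$ for $N_5$, and $\diag(A-C,A-C,C)$ for $N_6$. In each of $N_1,N_2,N_3,N_6$ one can kill the third-slot entry by a suitable choice (such as $C=A$) to force $X_{11}+X_{22}=0$, and kill the first two entries analogously to force $X_{33}=0$. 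In $N_4$ and $N_5$ the first slot is decoupled from the third and forces $X_{11}=0$. The argument is almost entirely mechanical; the only conceptual step is the $\operatorname{Ad}_p$-invariance of $u_1$, and I do not foresee any real obstacle.
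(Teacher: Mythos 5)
Your proposal is correct and takes essentially the same approach as the paper: use $\operatorname{Ad}_p u_1 = u_1$ to reduce condition~1 to the orthogonality $g_0(X, u_1-u_2)=0$, observe that $u_1-u_2$ is diagonal with entries in $\operatorname{Im}\mathbb{H}$, and extract the linear constraints on the diagonal of $X$ case by case. The paper only works $N_4$ in detail and declares the rest similar; you make the $\operatorname{Ad}_p$-invariance explicit and tabulate all six differences $u_1-u_2$, but the argument is the same.
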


\begin{proof}

Part 1 of Corollary \ref{eqns} gives $\operatorname{Re}\operatorname{Tr}(X, Ad_p u_1 - u_2) = \operatorname{Re}\operatorname{Tr}(X, u_1 - u_2) = 0$.  It is now easy to see that for $N_1$, $N_2$, $N_3$, and $N_6$, this equation implies $X_{11} = -X_{22}$ and for $N_4$ and $N_5$, this equation implies $X_{11} = 0$.

We work this out in detail only in the case of $N_4$, the other cases being similar.  In this case, $U = \left\{ \big(\diag(p,p,p), \diag(1,p,q)\big): p,q\in Sp(1)\right\}$ and therefore $\mathfrak{u} = \left\{ \big(\diag(r,r,r), \diag(0,r,s)\big): r,s\in\mathfrak{sp}(1) = \operatorname{Im}\mathbb{H} \right\}$.

We thus arrive at $ 0  = -\operatorname{Re}\operatorname{Tr}(X(u_1-u_2)) = -X_{11}(r) - X_{33}(r-s)$.  Setting $r = 0$ and letting $s$ vary implies $X_{33} = 0$.  Then, setting $s = 0$ and letting $r$ vary implies $X_{11} = 0$.

\end{proof}

With this, we can prove a stronger result on the form of $Y$.

\begin{lemma}\label{lem2}
Suppose $X, Y\in \mathfrak{g}$ with $X_{33} = Y_{33} = 0$ and that $X$ and $Y$ satisfy condition 2 of Corollary \ref{eqns}.  Then there are vectors $X',Y'\in \mathfrak{g}$ with $\operatorname{span}\{X', Y'\} = \operatorname{span}\{X,Y\}$ for which $X'_\mathfrak{p} = 0$ and $Y'_{\mathfrak{k}} = 0$.

\end{lemma}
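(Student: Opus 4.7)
The plan is to perform two successive replacements preserving $\operatorname{span}\{X,Y\}$: first pass from $X$ to $X':=X-sY\in\mathfrak{k}$, then from $Y$ to $Y':=Y-tX'\in\mathfrak{p}$.

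For the first replacement, recall from the paragraph preceding Lemma~\ref{lem1} that whenever $[X_\mathfrak{p},Y_\mathfrak{p}]=0$ holds in $\mathfrak{p}$ (as guaranteed by condition~2 of Corollary~\ref{eqns}), the positive sectional curvature of $G/K=\mathbb{H}P^2$ forces $X_\mathfrak{p}$ and $Y_\mathfrak{p}$ to be linearly dependent. Hence there is $s\in\mathbb{R}$ so that $X':=X-sY$ has $X'_\mathfrak{p}=0$, i.e., $X'\in\mathfrak{k}$. Linearity of the hypotheses $X_{33}=0$ and condition~2 ensures they persist for the pair $(X',Y)$.

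For the second replacement, since $X'\in\mathfrak{k}$ and $(G,K)$ is a symmetric pair, the relation $[X',Y]=0$ decomposes into $[X',Y_\mathfrak{k}]=0$ in $\mathfrak{k}$ and $[X',Y_\mathfrak{p}]=0$ in $\mathfrak{p}$. It then suffices to prove $Y_\mathfrak{k}=tX'$ for some $t\in\mathbb{R}$: setting $Y':=Y-tX'$ gives $Y'_\mathfrak{k}=0$, and a determinant check shows $\{X',Y'\}$ still spans $\operatorname{span}\{X,Y\}$.

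Establishing $Y_\mathfrak{k}\in\mathbb{R}\cdot X'$ is the main obstacle. Assuming $Y_\mathfrak{p}\neq 0$ (the case $Y_\mathfrak{p}=0$ puts the span entirely in $\mathfrak{k}$ and is handled separately in the curvature argument), the equation $[X',Y_\mathfrak{p}]=0$ forces $X'$ into the $6$-dimensional isotropy subalgebra $\mathfrak{k}_v\cong\mathfrak{sp}(1)\oplus\mathfrak{sp}(1)$ of the tangent direction $v:=Y_\mathfrak{p}\in\mathbb{H}^2$; adding $X'_{33}=0$ together with the information on $X'_{11}$ and $X'_{22}$ supplied by Lemma~\ref{lem1} (implicit in the setting of Section~\ref{firstquasi}) cuts $X'$ down to a one-parameter family inside $\mathfrak{k}_v$. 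A parallel analysis using $[X',Y_\mathfrak{k}]=0$, $Y_{33}=0$, and the analogous Lemma~\ref{lem1} relation on $(Y_\mathfrak{k})_{11},(Y_\mathfrak{k})_{22}$ then confines $Y_\mathfrak{k}$ to the real line through $X'$; the coupling between the $\mathfrak{sp}(1)$ and $\mathfrak{sp}(2)$ components of each vector enforced by Lemma~\ref{lem1} is precisely what makes the proportionality constant common to both components. The concrete verification is a short case analysis on whether $v_1$ or $v_2$ vanishes, followed by direct matrix computation in $\mathfrak{sp}(3)$.
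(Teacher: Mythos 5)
Your proposal agrees with the paper's proof for the first replacement (arranging $X'_\mathfrak{p}=0$ via the positive curvature of $\mathbb{H}P^2$), but the second replacement is where the real content lies, and there your argument has a genuine gap. The paper's key observation — entirely absent from your sketch — is that $\mathfrak{sp}(2)$ is an ideal of $\mathfrak{k}=\mathfrak{sp}(1)\oplus\mathfrak{sp}(2)$, so $[X_\mathfrak{k},Y_\mathfrak{k}]=0$ immediately gives $[X_{\mathfrak{sp}(2)},Y_{\mathfrak{sp}(2)}]=0$; then the hypothesis $X_{33}=Y_{33}=0$ lets one interpret both $\mathfrak{sp}(2)$-components as tangent vectors to the positively curved $S^7=Sp(2)/Sp(1)$, forcing linear dependence. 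After checking $X'_{\mathfrak{sp}(2)}\neq 0$ (else Lemma~\ref{lem1} would give $X'=0$), one subtracts to get $Y_{\mathfrak{sp}(2)}=0$, and Lemma~\ref{lem1} applied once more gives $Y_{11}=0$, hence $Y_\mathfrak{k}=0$. You instead route through $[X',Y_\mathfrak{p}]=0$, the isotropy subalgebra $\mathfrak{k}_v$ of a tangent direction in $\mathbb{H}P^2$, and a dimension count, and you assert that Lemma~\ref{lem1} together with $X'_{33}=0$ cuts $\mathfrak{k}_v$ to a one-parameter family with the same story for $Y_\mathfrak{k}$. That central claim is stated but never verified, and it is not obviously correct: for specific directions $v$, the combination of $\mathfrak{k}_v$, $X'_{33}=0$, and the $X'_{11},X'_{22}$ relation already forces $X'=0$, so the asserted uniform one-parameter picture does not hold without a more careful analysis. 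Deferring the crux to ``a short case analysis ... followed by direct matrix computation'' leaves the lemma unproved.

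A few smaller points. Your reference to ``$v_1$ or $v_2$'' belongs to Theorems~\ref{n4n5} and~\ref{n1n2n3n6}, not to this lemma. You never confirm $X'\neq 0$ after the first replacement, which the paper handles via Lemma~\ref{lem1}. And your remark that the case $Y_\mathfrak{p}=0$ ``is handled separately in the curvature argument'' mischaracterizes the situation: the paper's argument shows that after normalization $Y_\mathfrak{k}=0$, so $Y_\mathfrak{p}=0$ would force $Y=0$ and cannot occur when $X,Y$ are independent. Finally, note that both the paper's proof and yours use Lemma~\ref{lem1}, which requires condition~1 of Corollary~\ref{eqns} even though the statement of Lemma~\ref{lem2} only mentions condition~2; this is fine in context since the lemma is only invoked when all three conditions hold, but it is worth keeping in mind when citing the lemma.
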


\begin{proof}

The condition $[X_{\mathfrak{k}}, Y_{\mathfrak{k}}] = 0$ implies, since the $Sp(2)$ factor of $K$ is normal in $K$, that $[X_{\mathfrak{sp}(2)}, Y_{\mathfrak{sp}(2)}] = 0$.  Because $X_{33} = Y_{33} = 0$, we may interpret $X_{\mathfrak{sp}(2)}$ as an element of the tangent space to $Sp(2)/Sp(1) = S^7$.  Since this has positive curvature, $X_{\mathfrak{sp}(2)}$ and $Y_{\mathfrak{sp}(2)}$ are dependent.  Now, note that if $X_{\mathfrak{sp}(2)} = 0$, then Lemma \ref{lem1} implies that $X = 0$, so $X_{\mathfrak{sp(2)}} \neq 0$.  In particular, by subtracting an appropriate multiple of $X$ from $Y$, we may assume $Y_{\mathfrak{sp}(2)} = 0$.  Then Lemma \ref{lem1} implies $Y_{11} = 0$.
\end{proof}

We can now prove that $N_4$ and $N_5$ are quasi-positively curved.

\begin{theorem}\label{n4n5} For the biquotients $N_4$ and $N_5$, if $\theta$ is not an integral multiple of $\frac{\pi}{2}$, then every plane at the point $[p^{-1}]$ is positively curved.

\end{theorem}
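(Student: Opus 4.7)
The plan is to apply Corollary \ref{eqns} at the point $[p^{-1}]$ and show that, for $\theta$ not an integral multiple of $\pi/2$, no pair of linearly independent vectors $X, Y \in \mathfrak{g}$ can simultaneously satisfy conditions 1, 2, and 3. Because we have chosen $H = K = Sp(1) \times Sp(2)$, the splittings $\mathfrak{g} = \mathfrak{k}\oplus\mathfrak{p}$ and $\mathfrak{g} = \mathfrak{h}\oplus\mathfrak{q}$ coincide, so condition 3 becomes an $Ad_{p^{-1}}$-twisted version of the bracket condition in condition 2.

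First I would invoke Lemma \ref{lem1}, which for both $N_4$ and $N_5$ yields $X_{11} = X_{33} = 0$ and, symmetrically, $Y_{11} = Y_{33} = 0$ (in the $N_5$ case the Lie algebra vector $u_1 - u_2$ again takes the form $\diag(\ast, 0, \ast)$, so the same argument applies). Combined with the preliminary reduction $X_\mathfrak{p} = 0$ and Lemma \ref{lem2} (whose hypotheses $X_{33} = Y_{33} = 0$ are now in hand), we may assume $X$ has nonzero entries only at the $(2,2)$ and $(2,3)/(3,2)$ positions, while $Y$ lies entirely in $\mathfrak{p}$ with possibly nonzero entries only at $(1,2)/(2,1)$ and $(1,3)/(3,1)$.

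Next, since $X_\mathfrak{p} = 0$ and $Y_\mathfrak{k} = 0$, the $\mathfrak{k}$- and $\mathfrak{p}$-part brackets in condition 2 are automatic, and only $[X, Y] = 0$ remains. Identifying $\mathfrak{p} \cong \mathbb{H}^2$ as a right $\mathfrak{sp}(2)$-module, this collapses to the single linear equation $(Y_{12}, Y_{13}) \cdot \begin{pmatrix} X_{22} & X_{23} \\ -\overline{X}_{23} & 0 \end{pmatrix} = 0$. A short case analysis, using that $X, Y \neq 0$, forces $X_{23} = 0$ and $Y_{12} = 0$, and leaves $X_{22}$ and $Y_{13}$ nonzero.

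Finally, I would compute $Ad_{p^{-1}}X$ and $Ad_{p^{-1}}Y$ directly with $p = R_\theta$ in the upper-left $2\times 2$ block. The $\mathfrak{k}$-part of $Ad_{p^{-1}}X$ becomes the block-diagonal matrix $\diag(\sin^2\theta\, X_{22}, \cos^2\theta\, X_{22}, 0)$, while the $\mathfrak{k}$-part of $Ad_{p^{-1}}Y$ has its only nonzero entries at the $(2,3)$ and $(3,2)$ slots, proportional to $\sin\theta\, Y_{13}$. A one-line computation then shows that the $(2,3)$-entry of the commutator $[(Ad_{p^{-1}}X)_\mathfrak{k}, (Ad_{p^{-1}}Y)_\mathfrak{k}]$ equals $\cos^2\theta \sin\theta\, X_{22} Y_{13}$, which is nonzero whenever $\cos\theta\sin\theta \neq 0$, contradicting condition 3. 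The main technical step is exactly this final trigonometric factorization: all of the preparatory reductions funnel the obstruction down to whether $\cos^2\theta\sin\theta$ vanishes, which happens precisely when $\theta \in \frac{\pi}{2}\mathbb{Z}$, matching the hypothesis of the theorem.
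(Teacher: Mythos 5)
Your proof is correct and follows essentially the same route as the paper's: invoke Lemma \ref{lem1} and Lemma \ref{lem2} to reduce $X$ to the $\mathfrak{sp}(2)$ part of $\mathfrak{k}$ (with $(1,1)$ and $(3,3)$ entries vanishing) and $Y$ to $\mathfrak{p}$, use $[X,Y]=0$ to kill $x_5$ (your $X_{23}$) and $y_2$ (your $Y_{12}$), then conjugate by $p^{-1}$ and observe that the $\mathfrak{k}$-part of condition 3 forces $\cos^2\theta\sin\theta\, x_4 y_3 = 0$, which is incompatible with $\theta \notin \tfrac{\pi}{2}\mathbb{Z}$ and $X,Y\neq 0$. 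The only cosmetic difference is that you package the two scalar equations from $[X,Y]=0$ as a single quaternion row-vector identity; the substance is identical.
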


\begin{proof}  We assume for a contradiction that there is a plane of $0$ curvature at the point $[p^{-1}]$.  Then there are vectors $X$ and $Y\in \mathfrak{g}$ satisfying the three conditions in Corollary \ref{eqns}.  Lemma 2 implies that we may assume $$X = \begin{bmatrix} 0& 0 & 0\\ 0& x_4 & x_5\\  0& -\overline{x_5} &0 \end{bmatrix} \text{ and } Y = \begin{bmatrix}0 & y_2 & y_3\\ -\overline{y_2} &0&0\\ -\overline{y_3} &0&0 \end{bmatrix}.$$   The condition $[X,Y]=0$ is equivalent to the pair of equations $y_2 x_5 = 0$ and $y_2 x_4 = y_3 \overline{x_5}$.  But these, coupled with the fact that $X\neq 0 \neq Y$, easily imply $y_2 =  x_5 = 0$.

Computing $Ad_{p^{-1}} X$ and $Ad_{p^{-1}}Y$, we arrive at

$$\begin{bmatrix} \sin^2\theta \, x_4 & -\cos\theta \sin\theta \, x_4 & 0 \\ -\cos\theta \sin\theta \, x_4&\cos^{2}\theta \, x_4 & 0  \\0  & 0 &0 \end{bmatrix} \text{ and } \begin{bmatrix} 0 &0 &\cos\theta\, y_3 \\0 & 0 & \sin\theta\, y_3 \\ - \cos\theta \,\overline{y_3} & - \sin\theta \,\overline{y_3} & 0 \end{bmatrix}.$$

Condition 3 of Corollary \ref{eqns} implies $[(Ad_{p^{-1}} X)_\mathfrak{k}, (Ad_{p^{-1}} Y)_\mathfrak{k}] = 0$, that is, that $x_4 y_3 \cos^2\theta \sin \theta = 0$.  Since $\theta $ is not a multiple of $\frac{\pi}{2}$, this equation implies $x_4 = 0$ or $y_3 = 0$.  Hence, $X = 0$ or $Y = 0$, so they do not span a plane.
\end{proof}

For $N_1$ through $N_3$ and $N_6$, we must restrict the form of $p$ further.

\begin{theorem}\label{n1n2n3n6} For $N_1$, $N_2$, $N_3$, or $N_6$, if $\theta$ is not an integral multiple of $\frac{\pi}{3}$ or $\frac{\pi}{4}$, then every plane at the point $[p^{-1}]$ is positively curved.

\end{theorem}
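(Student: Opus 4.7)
The plan is to mirror the proof of Theorem \ref{n4n5}: assume for contradiction that independent vectors $X, Y \in \mathfrak{g}$ satisfy the three conditions of Corollary \ref{eqns} at $[p^{-1}]$, and force one of the trigonometric identities $\sin 3\theta = 0$ or $\sin 4\theta = 0$. By Lemmas \ref{lem1} and \ref{lem2}, I would first put $X$ and $Y$ into the canonical form
$$X = \begin{bmatrix} x_1 & 0 & 0 \\ 0 & -x_1 & x_5 \\ 0 & -\overline{x_5} & 0 \end{bmatrix}, \qquad Y = \begin{bmatrix} 0 & y_2 & y_3 \\ -\overline{y_2} & 0 & 0 \\ -\overline{y_3} & 0 & 0 \end{bmatrix},$$
with $x_1 \in \operatorname{Im}\mathbb{H}$ and $x_5, y_2, y_3 \in \mathbb{H}$.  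The essential new feature, compared with the $N_4, N_5$ argument, is that $x_1$ is now typically nonzero, which considerably complicates the bookkeeping.

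Expanding $[X,Y] = 0$ from condition 2 produces the two quaternion identities $x_1 y_3 = y_2 x_5$ and $x_1 y_2 + y_2 x_1 + y_3 \overline{x_5} = 0$.  A short inspection shows that $x_1 = 0$ forces $X$ or $Y$ to vanish, so I would assume $x_1 \neq 0$ and split on whether $y_3 = 0$. In the subcase $y_3 = 0$, the first relation gives $x_5 = 0$ (otherwise $Y = 0$), and the second reduces to $\{x_1, y_2\} = 0$; separating real and imaginary parts forces $y_2 \in \operatorname{Im}\mathbb{H}$ with $y_2 \perp x_1$.  Then directly computing $Ad_{p^{-1}}X$, $Ad_{p^{-1}}Y$ and projecting onto $\mathfrak{k}$, the $\mathfrak{sp}(1)$-component of $[(Ad_{p^{-1}}X)_\mathfrak{k}, (Ad_{p^{-1}}Y)_\mathfrak{k}]$ equals $-\tfrac{1}{2}\sin 4\theta \cdot [x_1, y_2]$; since $[x_1, y_2] \neq 0$ (orthogonal nonzero pure imaginaries), condition 3 forces $\sin 4\theta = 0$.

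In the remaining subcase $y_3 \neq 0$, both $y_2$ and $x_5$ are nonzero and $x_5 = y_2^{-1} x_1 y_3$.  Substituting into the second relation and right-multiplying by $\overline{y_2}$ yields $y_2 x_1 \overline{y_2} = (|y_3|^2 - |y_2|^2)\, x_1$.  A norm comparison forces $|y_3|^2 = 2|y_2|^2$, and the resulting conjugation identity $y_2 x_1 y_2^{-1} = x_1$ forces $y_2 \in \mathbb{R}[x_1]$, say $y_2 = a + bx_1$. The $(1,1)$ entry of the $\mathfrak{sp}(2)$-component of $[(Ad_{p^{-1}}X)_\mathfrak{k}, (Ad_{p^{-1}}Y)_\mathfrak{k}]$ then reduces (via $\operatorname{Im}(y_3 \overline{x_5})$) to a nonzero multiple of $a\, x_1$, forcing $a = 0$.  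Thus $y_2 = b x_1$ and $x_5 = y_3/b$, and with these identifications the $(1,2)$ entry of the same $\mathfrak{sp}(2)$-commutator collapses to
$$-(\sin\theta\cos 2\theta + \cos\theta\sin 2\theta)\, x_1 y_3 = -\sin 3\theta \cdot x_1 y_3,$$
so (since $x_1 y_3 \neq 0$) condition 3 forces $\sin 3\theta = 0$.  Combining both subcases, whenever $\theta$ is not an integral multiple of $\pi/3$ or $\pi/4$, no $0$-curvature plane at $[p^{-1}]$ can exist.

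The main obstacle is the noncommutative quaternion algebra in the second subcase: one must carefully track how the substitution $x_5 = y_2^{-1} x_1 y_3$ propagates through the various $\mathfrak{sp}(2)$-commutator entries, and distill the norm comparison $|y_3|^2 = 2|y_2|^2$ together with the centralizer condition on $y_2$ needed to reach the clean identity $\sin 3\theta = 0$.  All remaining computations are routine and parallel those already carried out explicitly for $N_4, N_5$.
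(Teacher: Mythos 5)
Your proof is correct, but it takes a genuinely different route than the paper's. Both start from the same canonical forms for $X$ and $Y$ (after Lemmas \ref{lem1} and \ref{lem2}) and the same pair of quaternion relations coming from $[X,Y]=0$, and both establish $x_1\neq 0$. The divergence is in which half of condition 3 of Corollary \ref{eqns} is exploited. The paper uses the $\mathfrak{p}$-part, $[(Ad_{p^{-1}}X)_\mathfrak{p},(Ad_{p^{-1}}Y)_\mathfrak{p}]=0$, which is conveniently packaged as a real linear-dependence condition on the two vectors $v_1,v_2$; since $\theta$ is not a multiple of $\pi/4$, this forces $y_2$ to be purely imaginary and (after rescaling) equal to $x_1$, and gives the explicit relation $y_3 = \tfrac{1}{2}(\tan^2\theta-1)x_5$. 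Substituting these back into the two $[X,Y]=0$ relations then yields $\tfrac{1}{2}(\tan^2\theta-3)\,x_1 x_5=0$, and ruling out multiples of $\pi/3$ finishes things -- no case split on $y_3$ is needed because the $\mathfrak{p}$-part already forces $y_3$ to be proportional to $x_5$, which is shown to be nonzero. You instead use the $\mathfrak{k}$-part, $[(Ad_{p^{-1}}X)_\mathfrak{k},(Ad_{p^{-1}}Y)_\mathfrak{k}]=0$, which necessitates the split on $y_3=0$ and leans more heavily on quaternion algebra (the anticommutator $\{x_1,y_2\}=0$, the norm comparison giving $|y_3|^2 = 2|y_2|^2$, and the centralizer condition $y_2 x_1 y_2^{-1}=x_1$ forcing $y_2\in\mathbb{R}\oplus\mathbb{R}x_1$). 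I verified your key computations: the $(1,1)$ entry of the diagonal block commutator in the subcase $y_3=0$ is indeed $-\tfrac12\sin 4\theta\,[x_1,y_2]$ with $[x_1,y_2]\neq 0$ for orthogonal nonzero imaginaries; in the subcase $y_3\neq 0$, the $(2,2)$-position entry reduces to $-\tfrac{a\sin 2\theta\,|y_3|^2}{|y_2|^2}x_1$ (forcing $a=0$ since $\sin 2\theta\neq 0$), and the $(2,3)$-position entry then collapses to $-\sin 3\theta\,x_1 y_3$. Both routes are valid. The payoff of yours is the cleaner trigonometric endpoints $\sin 3\theta=0$ and $\sin 4\theta=0$, which make the excluded $\theta$-values transparent; the payoff of the paper's is that the parallelism criterion built into the $\mathfrak{p}$-part packages the case analysis for you and avoids the noncommutative bookkeeping you flag as the main obstacle.
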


\begin{proof}

Assume for a contradiction that there is a $0$ curvature plane at the point $[p^{-1}]\in G\bq U$.  As in the proof of Theorem \ref{n4n5}, we may assume that there are two independent vectors of the form $$X=\begin{bmatrix} x_{1} & 0 & 0\\0 &-x_{1}& x_{5}\\0 & - \overline{x_5}& 0 \end{bmatrix} \text{ and } Y=\begin{bmatrix}0 & y_{2} & y_{3}\\ - \overline{y_2} & 0 & 0 \\ - \overline{y_3} & 0 & 0 \end{bmatrix}$$ satisfying all the conditions of Corollary \ref{eqns}.

The equation $[X,Y]=0$ is equivalent to the equations \begin{align} x_1 y_2 + y_2 x_1+y_3 \overline{x_5} &= 0\\  x_1 y_3 - y_2 x_5 &= 0. \end{align}

Equations (1) and (2), together with the fact that $X$ and $Y$ are non-zero, imply $x_1$ and $y_2$ are both non-zero.  We now see that $[(Ad_{p^{-1}}X)_\mathfrak{p}, (Ad_{p^{-1}}Y)_\mathfrak{p}]=0$ if and only if $$v_1 = \begin{bmatrix} 2\cos\theta\sin\theta x_{1} \\ -\sin\theta x_{5} \end{bmatrix} \text{ and } v_2 = \begin{bmatrix} \operatorname{Re}y_2 + (\cos^2\theta - \sin^2\theta)\operatorname{Im}y_2 \\ \cos\theta y_{3} \end{bmatrix}$$ are dependent over $\mathbb{R}$.

Since $\theta$ is not a multiple of $\frac{\pi}{4}$, $v_1$ and $v_2$ can only be dependent if $y_2$ is purely imaginary.  Thus, by rescaling $Y$, we may assume $y_2 = x_1$, which then implies $y_3  = \frac{1}{2}(\tan^2 \theta -1) x_5$.  Substituting this into (1) shows that $x_1 = 0$ if and only if $x_5 = 0$, so $x_5\neq 0$.  Then, substituting this into equation (2) gives $\frac{1}{2}(\tan^2 \theta -3) x_1 x_5 = 0$.  Since $\theta$ is not a multiple of $\frac{\pi}{3}$, this implies $x_1 x_5 = 0$, giving a contradiction.

\end{proof}

\subsection{\texorpdfstring{Quasi-positive curvature on $N_7$ and $N_8$}{Quasi-positive curvature on N7 and N8}}
\label{lastquasi}

Both biquotients in this section uses the homomorphism $\phi_3$, which we now describe on the Lie algebra level.

\begin{proposition}\label{pr41}
For $t = t_i + t_j + t_k \in \operatorname{Im}\mathbb{H}$, $$\phi_3(t) = \begin{bmatrix} 3t_i & \sqrt{3} (t_j+t_k) \\ \sqrt{3} (t_j+t_k) & 2(t_k-t_j)-t_i \end{bmatrix},$$ up to equivalence of representations.

\end{proposition}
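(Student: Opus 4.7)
The plan is to verify the formula by showing that the right-hand side defines a Lie algebra homomorphism $\rho : \mathfrak{sp}(1) \cong \operatorname{Im}\mathbb{H} \to \mathfrak{sp}(2)$ whose associated $4$-dimensional complex representation is irreducible; the claim then follows from Proposition \ref{sp1irrep}, which gives uniqueness of the $4$-dimensional irreducible representation of $Sp(1)$ up to equivalence.

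First I would check that $\rho(t)$ actually lies in $\mathfrak{sp}(2)$. The diagonal entries $3t_i$ and $2(t_k - t_j) - t_i$ are pure-imaginary quaternions, so they satisfy $\bar{q} = -q$, while the off-diagonal entries are both $\sqrt{3}(t_j + t_k)$, also pure-imaginary, so $\rho(t)_{21} = -\overline{\rho(t)_{12}}$. Hence $\rho(t)^* = -\rho(t)$. Next I would verify $\rho$ is a Lie algebra homomorphism: by linearity it suffices to check the bracket relations on the basis $\{i,j,k\}$, and one computes $[\rho(i), \rho(j)] = 2\rho(k)$ directly as a product of $2\times 2$ quaternionic matrices using $ij = k$ and $ji = -k$. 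The other two brackets then follow analogously, or by the cyclic symmetry of $\{i,j,k\}$. This is a short but delicate verification in which the factors $3$, $\sqrt{3}$, and $2$ must conspire to produce the correct right-hand side.

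To finish I would show that the resulting $4$-dimensional complex representation is irreducible. Under the standard embedding $\mathfrak{sp}(2) \hookrightarrow \mathfrak{su}(4)$ sending a quaternion $a + bi + cj + dk$ to $\begin{pmatrix} a+bi & c+di \\ -c+di & a-bi \end{pmatrix}$, the matrix $\rho(i) = \operatorname{diag}(3i, -i)$ becomes a diagonal $4 \times 4$ complex matrix with eigenvalues $\pm 3i$ and $\pm i$. These are exactly the weights of the symmetric cube $\operatorname{Sym}^3 \mathbb{C}^2$ of the defining representation of $SU(2) = Sp(1)$. Since they are all distinct, any nontrivial decomposition of $\rho$ into smaller $Sp(1)$-irreducibles would produce a repeated weight (for example, $\phi_2 \oplus \phi_0$ yields weights $\pm 2, 0, 0$, and $2\phi_1$ yields $\pm 1, \pm 1$), contradicting the distinctness observed. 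Hence $\rho$ is irreducible, and by Proposition \ref{sp1irrep} it must be equivalent to $d\phi_3$.

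The main obstacle I anticipate is simply arithmetic precision in the bracket computation; a misplaced sign between $jk = i$ and $kj = -i$, or a miscounted factor of $\sqrt{3}$, would leave the formula looking plausible yet failing to close the Lie bracket. Everything else reduces to a standard weight analysis combined with the uniqueness clause of Proposition \ref{sp1irrep}, and the somewhat asymmetric-looking formula merely encodes one particular orthonormal basis of $\mathbb{H}^2$ with respect to which $\phi_3$ takes this concrete form.
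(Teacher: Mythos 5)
Your proposal is correct and follows essentially the same approach as the paper: verify the formula defines a Lie algebra homomorphism into $\mathfrak{sp}(2)$, compute the eigenvalues of $\rho(i)$ under the inclusion $\mathfrak{sp}(2)\subseteq\mathfrak{su}(4)$ to get weights $\pm 1, \pm 3$, and conclude by uniqueness of the $4$-dimensional irreducible representation of $Sp(1)$. The only difference is that you spell out the irreducibility-from-distinct-weights step, which the paper leaves implicit.
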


\begin{proof}
First, one easily verifies that $\phi_3$ is a Lie algebra homomorphism, and thus $\phi(\mathfrak{sp}(1))$ is a sub-algebra of $\mathfrak{g}$.

The weights of the representation $\phi_3$ are $\pm 1$ and $\pm 3$.  Since the image of $i$ (after applying the natural inclusion $\mathfrak{sp}(2)\subseteq \mathfrak{su}(4)$) clearly has these eigenvalues, so, $\phi_3$ is the desired homomorphism.

\end{proof}

We let $Sp(1)_{\text{max}}$ denote the image of $Sp(1)$ in $Sp(2)$ under the homomorphism $\phi_3$.  Recall Berger \cite{Ber} showed that the Berger space $Sp(2)/Sp(1)_{\text{max}}$ is a normal homogeneous space of positive sectional curvature.

We note that, as in the case of $N_1$ through $N_6$, both $N_7$ and $N_8$ have the property that $Ad_p u_1 = u_1$ for any $(u_1,u_2)\in\mathfrak{u}$.

We will now begin to understand the structure of any $X$ and $Y$ satisfying the conditions of Corollary \ref{eqns}.  Recall that we are assuming $X = X_\mathfrak{k}$.  We note that for $N_7$, the fact that $X$ is orthogonal to $\mathfrak{u}_1$ immediately implies that $X_{\mathfrak{sp}(2)}\neq 0$.  Similarly, for $N_8$, the fact that $X$ is orthogonal to $\mathfrak{u}_2$ immediately implies that $X_{\mathfrak{sp}(2)} \neq 0$.  Using this, we show that $Y$ can be assumed to have a nice form.

\begin{lemma}\label{lemmax2}  Suppose $X$ and $Y$ satisfy conditions 1 and 2 of Corollary \ref{eqns}.  Then $X_{\mathfrak{sp}(2)}$ and $Y_{\mathfrak{sp}(2)}$ are linearly dependent.

\end{lemma}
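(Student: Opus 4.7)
My plan is as follows. Since $\mathfrak{sp}(2)$ is an ideal in $\mathfrak{k}=\mathfrak{sp}(1)\oplus\mathfrak{sp}(2)$, condition 2 of Corollary \ref{eqns} immediately yields $[X_{\mathfrak{sp}(2)},Y_{\mathfrak{sp}(2)}]=0$. In each of the two biquotients I would use an appropriate slice of condition 1 to place both $X_{\mathfrak{sp}(2)}$ and $Y_{\mathfrak{sp}(2)}$ in the reductive complement at a point of a positively curved normal homogeneous quotient of $Sp(2)$, and then conclude by the standard fact that two commuting vectors in such a complement must be linearly dependent.

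For $N_7$, I would specialize condition 1 to pairs $(u_1,u_2)\in\mathfrak{u}$ with $u_1=0$, so that $u_2$ ranges over $\{\diag(0,\phi_3(s)):s\in\mathfrak{sp}(1)\}$. The orthogonalities $g_0(X,u_2)=g_0(Y,u_2)=0$ then force $X_{\mathfrak{sp}(2)},Y_{\mathfrak{sp}(2)}\perp\phi_3(\mathfrak{sp}(1))$ inside $\mathfrak{sp}(2)$, so both vectors lie in the tangent space at the identity coset to the Berger space $Sp(2)/Sp(1)_{\text{max}}$. Since Berger's theorem guarantees that this space has positive sectional curvature, the commutation relation forces $X_{\mathfrak{sp}(2)}$ and $Y_{\mathfrak{sp}(2)}$ to be linearly dependent.

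For $N_8$, I would instead specialize to pairs $(u_1,u_2)$ with $u_2=0$, so that $u_1$ ranges over $\{\diag(0,0,r):r\in\mathfrak{sp}(1)\}$ and the orthogonality forces $X_{33}=Y_{33}=0$. This is exactly the hypothesis of Lemma \ref{lem2}, whose argument now applies verbatim: vanishing of the $(2,2)$-entry of the $2\times 2$ quaternionic block $X_{\mathfrak{sp}(2)}$ places $X_{\mathfrak{sp}(2)}$ and $Y_{\mathfrak{sp}(2)}$ in the tangent space at a point of $S^7=Sp(2)/Sp(1)$, and positive curvature of $S^7$ yields linear dependence.

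The main subtlety I anticipate is simply identifying, in each of the two biquotients, which slice of condition 1 produces a clean reductive-complement orthogonality and which positively curved homogeneous quotient of $Sp(2)$ to invoke: the right factor $\mathfrak{u}_2$ forces us into the Berger picture for $N_7$, while the left factor $\mathfrak{u}_1$ forces us into the $S^7$ picture of Lemma \ref{lem2} for $N_8$. Once the correct quotient is identified in each case, the argument closes uniformly via the standard curvature formula for normal homogeneous spaces.
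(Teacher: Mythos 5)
Your proof is correct and takes essentially the same approach as the paper: use that $\mathfrak{sp}(2)$ is an ideal in $\mathfrak{k}$ to get $[X_{\mathfrak{sp}(2)},Y_{\mathfrak{sp}(2)}]=0$, then for $N_7$ use orthogonality to $\mathfrak{u}_2$ to land in the tangent space of the positively curved Berger space $Sp(2)/Sp(1)_{\text{max}}$, and for $N_8$ use orthogonality to $\mathfrak{u}_1$ to get $X_{33}=Y_{33}=0$ and invoke the $S^7=Sp(2)/Sp(1)$ argument from Lemma \ref{lem2}. Your write-up is, if anything, slightly more explicit than the paper's about which slice of condition 1 is used in each case (the paper's citation of Lemma \ref{lem2} for $N_8$ really means ``the argument inside Lemma \ref{lem2},'' as you correctly note).
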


\begin{proof} The condition $[X_\mathfrak{k},Y_\mathfrak{k}] = 0$ implies that $[X_{\mathfrak{sp}(2)}, Y_{\mathfrak{sp}(2)}] = 0$.

For $N_8$, the fact that $g_0(X, \mathfrak{u}_1) = g_0(Y,\mathfrak{u}_1)$ implies, via Lemma \ref{lem2}, that $X_{\mathfrak{sp}(2)}$ and $Y_{\mathfrak{sp}(2)}$ are linearly dependent.

For $N_7$, the condition $g_0(X, \mathfrak{u}_2) = g_0(Y,\mathfrak{u}_2) = 0$ allows us to interpret $X_{\mathfrak{sp}(2)}$ and $Y_{\mathfrak{sp}(2)}$ as elements of the tangent space of the Berger space $Sp(2)/Sp(1)_{\text{max}}$.  As this is known to have positive curvature \cite{Ber}, this implies $X_{\mathfrak{sp}(2)}$ and $Y_{\mathfrak{sp}(2)}$ are dependent.

\end{proof}

In particular, by subtracting an appropriate multiple of $X$ from $Y$, we can and will assume $Y_{\mathfrak{sp}(2)} = 0$.  Then, the argument used above to establish the fact that $X_{\mathfrak{sp}(2)}\neq 0$, when applied to $Y$, gives that $y_1 = 0$.

At this point, we know that if $X$ and $Y$ are linearly independent vectors in $\mathfrak{g}$ satisfying the conditions of Corollary \ref{eqns}, then we may assume without loss of generality  that $$X = \begin{bmatrix} x_1 & 0 & 0 \\ 0 & x_4 & x_5 \\ 0 & -\overline{x_5} & x_6\end{bmatrix} \text{ and } Y= \begin{bmatrix}0 & y_2 & y_3 \\ -\overline{y_2} & 0 & 0\\ -\overline{y_3} & 0 & 0 \end{bmatrix}.$$  Further, for $N_7$, orthogonality to $\mathfrak{u}_1$ implies $x_1 + x_4 + x_6 = 0$ while for $N_8$, it implies $x_6 = 0$.  Using Proposition \ref{pr41}, orthogonality to $\mathfrak{u}_2$ is equivalent to the following equations for $N_7$ and $N_8$ respectively.

\begin{table}[H]

\begin{center}

\begin{tabular}{clcl}

$(7a)$ & $(x_6)_i = 3(x_4)_i$ & $(8a)$ & $(x_1)_i = -3(x_4)_i$ \\

$(7b)$ & $(x_6)_j =  - \sqrt{3}(x_5)_j $ & $(8b)$ & $(x_1)_j = -2\sqrt{3}(x_5)_j$ \\

$(7c)$ & $(x_6)_k = \sqrt{3}(x_5)_k$ & $(8c)$ & $(x_1)_k = -2\sqrt{3}(x_5)_k$\\

\end{tabular}

\end{center}

\end{table}

The condition $[X,Y] = 0$  equivalent to the pair of equations \begin{align} x_1 y_2 - y_2 x_4 + y_3 \overline{x_5} &= 0\\ x_1 y_3-y_2 x_5-y_3x_6 &= 0.\end{align}

As in the proof of Theorem \ref{n1n2n3n6}, the condition $[(Ad_{p^{-1}} X)_\mathfrak{p}, (Ad_{p^{-1}} Y)_\mathfrak{p}] = 0$ is equivalent to the two vectors $$v_1 = \begin{bmatrix} \cos\theta \sin\theta(x_1-x_4)\\ -\sin\theta\, x_5\end{bmatrix} \text{ and } v_2 = \begin{bmatrix} \operatorname{Re}y_2 + (\cos^2\theta - \sin^2\theta) \operatorname{Im} y_2\\ \cos \theta\, y_3\end{bmatrix}$$ being linearly dependent over $\mathbb{R}$.

We now note that if $\theta$ is not an integral multiple of $\frac{\pi}{4}$, $v_2 \neq 0$.  For, if $\cos^2\theta \neq \sin^2\theta$ and  $\cos\theta \neq 0$, then $v_2 = 0$ if and only if $Y = 0$.  We now prove that for these $\theta$, $v_1\neq 0$.

\begin{lemma}\label{lemmax4}  If $\theta$ is not an integral multiple of $\frac{\pi}{4}$, and if $X$ and $Y$ satisfy all the conditions of Corollary \ref{eqns}, then $v_1\neq 0$.

\end{lemma}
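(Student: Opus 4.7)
The plan is a short proof by contradiction: I would assume $v_1=0$ and deduce that $X$ must vanish, contradicting the standing linear independence of $X$ and $Y$. The argument splits cleanly into two quick substitution-and-kill computations, one for $N_7$ and one for $N_8$, and follows from the orthogonality relations $(7a)$--$(7c)$ and $(8a)$--$(8c)$ combined with the description of $\mathfrak{u}_1$ in each case.

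The first step is to unpack the condition $v_1=0$. Because $\theta$ is not an integral multiple of $\pi/4$, both $\cos\theta$ and $\sin\theta$ are nonzero, and hence the two quaternionic entries of $v_1$ vanish if and only if $x_1=x_4$ and $x_5=0$. My entire plan is then to feed these two identities into the orthogonality equations already on the table and observe that the remaining components $x_1, x_4, x_6$ of $X$ are forced to zero in both cases.

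For $N_7$, orthogonality of $X$ to $\mathfrak{u}_1$ gives $x_1+x_4+x_6=0$, and together with $x_1=x_4$ this yields $x_6=-2x_4$. With $x_5=0$, equations $(7b)$ and $(7c)$ force the $j$- and $k$-components of $x_6$ to vanish, which via $x_6=-2x_4$ kills the corresponding components of $x_4$; equation $(7a)$ then reads $(x_6)_i=3(x_4)_i$, while $x_6=-2x_4$ makes the same quantity equal to $-2(x_4)_i$, forcing the $i$-component to vanish as well. Hence $x_1=x_4=x_6=0$ and $X=0$. For $N_8$, orthogonality to $\mathfrak{u}_1$ gives the simpler relation $x_6=0$, and plugging $x_1=x_4$ and $x_5=0$ into $(8a)$--$(8c)$ gives $(x_1)_i=-3(x_1)_i$ along with $(x_1)_j=(x_1)_k=0$, so $x_1=0$ and again $X=0$.

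There is no genuine obstacle here; the argument is almost forced once one notices that $v_1=0$ pins down $x_1=x_4$ and $x_5=0$. The only place to tread carefully is the bookkeeping of the quaternionic components $(\cdot)_i,(\cdot)_j,(\cdot)_k$ coming from Proposition \ref{pr41} when reading off $(7a)$--$(7c)$ and $(8a)$--$(8c)$, but beyond that it is pure mechanical checking.
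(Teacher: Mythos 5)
Your proof is correct and follows essentially the same route as the paper: assume $v_1=0$, extract $x_1=x_4$ and $x_5=0$ from the nonvanishing of $\cos\theta$ and $\sin\theta$, and then feed these into the orthogonality relations to force all components of $X$ to zero. The only cosmetic difference is that you use $x_6=-2x_4$ directly in the $N_7$ case, while the paper first shows $x_6$ is purely $i$-valued and then applies the $i$-component of the relation $x_1+x_4+x_6=0$; the content is identical.
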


\begin{proof}
Assume $v_1 = 0$.  The condition on $\theta$ implies $x_1 = x_4$ and $x_5 = 0$.  

For $N_7$, equations $(7b)$ and $(7c)$ imply $x_6$ only has an $i$ part.  The $i$ component of the condition $x_1 + x_4 + x_6 = 0$, together with $(7a)$, then implies $x_6 = 0$.  Then $0=x_1 + x_4 = 2x_1$, so $X=0$.

For $N_8$, equations $(8b)$ and $(8c)$ imply $x_1$ only has an $i$ part.  Then $(8a)$ implies that $x_1 = x_4 = 0$. \end{proof}

We can now show $N_7$ and $N_8$ are quasi-positively curved.

\begin{theorem}\label{n7n8}

For any $\theta$ between $0$ and $\frac{\pi}{4}$, $N_7$ and $N_8$ are positively curved at the point $[p^{-1}]$.

\end{theorem}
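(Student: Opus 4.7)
The plan is to argue by contradiction. Suppose a zero-curvature plane exists at $[p^{-1}]$ for some $\theta \in (0,\pi/4)$; by the preceding structural results and Lemma \ref{lemmax4}, both $v_1$ and $v_2$ are nonzero, and condition 3 of Corollary \ref{eqns} forces $v_2 = \lambda v_1$ for some $\lambda \in \mathbb{R}^{*}$. Rescaling $Y$, I may assume $\lambda = 1$. Matching the first slot of $v_1$ and $v_2$ in $\mathbb{H}$ and using that $x_1 - x_4 \in \operatorname{Im}\mathbb{H}$ forces $\operatorname{Re} y_2 = 0$ together with $y_2 = \mu(x_1 - x_4)$, where $\mu = \frac{\cos\theta\sin\theta}{\cos^2\theta - \sin^2\theta} = \tfrac{1}{2}\tan(2\theta)$; matching the second slot gives $y_3 = -\tan\theta \cdot x_5$.

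I will then substitute these expressions into the first bracket equation $x_1 y_2 - y_2 x_4 + y_3\overline{x_5} = 0$, obtaining
\[
\mu(x_1^2 - 2x_1 x_4 + x_4^2) - \tan\theta\,|x_5|^2 = 0,
\]
where the noncommutativity of $\mathbb{H}$ produces the unsymmetric cross term $-2x_1 x_4$ rather than $-(x_1 x_4 + x_4 x_1)$. Using $x_1 x_4 = -\langle x_1, x_4\rangle + x_1 \times x_4$, valid because $x_1, x_4 \in \operatorname{Im}\mathbb{H}$, the scalar part of this identity collapses to
\[
\mu\,|x_1 - x_4|^2 + \tan\theta\,|x_5|^2 = 0.
\]
Since both $\mu$ and $\tan\theta$ are strictly positive on $(0,\pi/4)$, this forces $x_1 = x_4$ and $x_5 = 0$.

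Finally, I will feed these conclusions into the constraints specific to each biquotient to conclude $X = 0$, contradicting the linear independence of $X$ and $Y$. For $N_7$, combining $x_1 = x_4$ with $x_1 + x_4 + x_6 = 0$ gives $x_6 = -2x_1$; then $(7a)$ forces $(x_1)_i = 0$, while $(7b)$ and $(7c)$ combined with $x_5 = 0$ force $(x_1)_j = (x_1)_k = 0$. For $N_8$ the argument is even shorter: $x_6 = 0$ is already given, and $(8a)$ with $x_1 = x_4$ kills $(x_1)_i$, while $(8b)$ and $(8c)$ with $x_5 = 0$ kill $(x_1)_j$ and $(x_1)_k$. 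In both cases $X = 0$. I expect the main obstacle to be the middle step: one must correctly decompose the noncommutative expression $x_1^2 - 2x_1 x_4 + x_4^2$---which is \emph{not} $(x_1 - x_4)^2$---so as to identify its scalar part as the manifestly nonpositive quantity $-|x_1 - x_4|^2$, after which the positivity of $\mu$ and $\tan\theta$ immediately closes the argument.
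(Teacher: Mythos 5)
Your proposal is correct and follows essentially the same route as the paper: normalize $v_2 = \lambda v_1$, use the slot-matching to express $y_2$ and $y_3$ in terms of $x_1 - x_4$ and $x_5$, substitute into the first bracket relation $x_1 y_2 - y_2 x_4 + y_3\overline{x_5}=0$, and extract a sign contradiction from the real part. The only cosmetic differences are that you read off the scalar part of $x_1^2 - 2x_1x_4 + x_4^2$ directly as $-|x_1-x_4|^2$ (the paper instead separately notes $[x_4,x_1]=0$ and $(x_1-x_4)^2\le 0$), and that you re-derive $X=0$ from $x_1=x_4$, $x_5=0$ by hand, which is precisely the content of Lemma~\ref{lemmax4} that the paper simply cites as the contradiction.
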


\begin{proof}

Assume for a contradiction that there is a $0$ curvature plane at $[p^{-1}]$.  Then there are independent vectors $X = X_\mathfrak{k},Y\in \mathfrak{g}$ satisfying the three conditions of Corollary \ref{eqns}.  Then Lemma \ref{lemmax2} implies $Y = Y_\mathfrak{p}$.  We form the vectors $v_1$ and $v_2$, which must be parallel and, by Lemma \ref{lemmax4}, non-zero.

Hence we write $v_2 = \lambda v_1$ with $\lambda\neq 0$ a real number.  Substituting these into (4.3), dividing out by $\lambda$, and simplifying gives the equation $$\frac{\cos\theta \sin\theta}{\cos^2\theta - \sin^2\theta} ( (x_1-x_4)^2 + [x_4,x_1]) = \frac{\sin\theta}{\cos\theta} |x_5|^2.$$  We first point out that the conditions on $\theta$ imply the coefficients are all positive.  Also, we note that $(x_1-x_4)^2$, being the square of a purely imaginary number, is a non-positive real number, while $|x_5|^2$ is a non-negative real number.  This implies $[x_4,x_1] = 0$ since it is purely imaginary.  Thus, we see that for $0< \theta < \frac{\pi}{4}$, the left hand side is then non-positive while the right hand side is non-negative.  It follows that both sides are $0$, that is, that $v_1 = 0$, contradicting Lemma \ref{lemmax4}.

\end{proof}

\section{\texorpdfstring{The topology of $Sp(3)\bq Sp(1)^2$}{The topology of Sp(3)/Sp(1)Sp(1)}}
\label{examtop}

\setcounter{table}{0}
\renewcommand*{\theHtable}{\arabic{section}.\arabic{table}} 

In this section we show that biquotients of the form $G\bq U = Sp(3)\bq Sp(1)^2$ are distinct up to diffeomorphism.  We also show they are not diffeomorphic to any previously known example of a quasi-positively curved manifold.  As a preliminary observation, note that the long exact sequence in homotopy groups associated to the fibration $U\rightarrow G\rightarrow G\bq U$ shows $\pi_2(G\bq U) \cong \pi_1(U)$.  Thus, we immediately see that, other than $M_4$, the $M$ and $N$ biquotients where $U$ is isomorphic to $Sp(1)^2$ are homotopically distinct from the $O$ biquotients, with $U\cong Sp(1)\times SO(3)$.  We will show that the $M$, $N$, and $O$ manifolds are all pairwise distinct up to diffeomorphism and that, with the possible exception of the pairs $(N_1,N_6)$ and $(M_1,N_3)$, the $M$ and $N$ examples are distinct up to homotopy as well.

\subsection{\texorpdfstring{The cohomology groups of $Sp(3)\bq Sp(1)^2$}{The cohomology groups of Sp(3)/Sp(1)Sp(1)}}
\label{examgroups}

Here, we will compute the cohomology groups of all biquotients of the form $G\bq U = Sp(3)\bq Sp(1)^2$ where $U$ is isomorphic to $Sp(1)^2$.  In particular, we will show that the order of $H^8(G\bq U)$ distinguishes most of these examples up to homotopy.

We now set up notation.  We use the maximal torus $$T_U =\{ (\exp(iz), \exp(iw)): z,w\in \mathbb{R} \}$$ of $U$ and likewise, let $$T_G = \{ \diag(\exp(i y_1), \exp(i y_2), \exp(i y_3)):y_i\in \mathbb{R}\}.$$  We then have isomorphisms $H^\ast(T_U) \cong \Lambda(z,w)$, $H^\ast(T_G) \cong \Lambda(y_1,y_2,y_3)$.  We also have an isomorphism $H^\ast(G) \cong \Lambda(x_3, x_7, x_{11}).$  We use the notation $f = (f_1,f_2):U\rightarrow G\times G$ to denote the embedding of $U$ into $G\times G$.

Then, using Theorem \ref{toruscomp}, we identify $H^\ast(BG\times BG)$ as the subalgebra of $$H^\ast(BT_{G\times G})\cong \Z[\overline{y}_i\otimes 1, 1\otimes \overline{y}_i] $$ given by $$H^\ast(B(G\times G)) \cong \Z[\sigma_i(\overline{y}^2)\otimes 1, 1\otimes \sigma_i(\overline{y}^2)]$$ for $i=1$, $2$, and $3$.  Further, in the spectral sequence associated to the fibration on the right in Proposition \ref{commute}, we have $dx_i = \sigma_i(\overline{y}^2)\otimes 1 - 1\otimes\sigma_i(\overline{y}^2)$.

Now, for each entry in Table \ref{table:freelist}, we compute the map $B_f^\ast$.  To do this, we first identify the map $f^\ast:H^\ast(T_{G\times G})\rightarrow H^\ast(T_U)$ and then, using the transgressions of the generators of the cohomology rings, translate this into $Bf^\ast:H^\ast(BT_{G\times G})\rightarrow H^\ast(BT_U)$.  For example we have, for $N_6$, $f_1^\ast(y_1) = f_1^\ast(y_2) = z$ and $ f_1^\ast(y_3) = w$ while  $f_2^\ast(y_1) = f_2^\ast(y_2) = w$ and $f_2^\ast(y_3) = 0$.  This implies \begin{align*} Bf^\ast\left(\sigma_1(\overline{y}^2)\otimes 1 - 1\otimes \sigma_1\left(\overline{y}^2\right)\right) &= (Bf_1^\ast - Bf_2^\ast)\left(\overline{y}_1^2 + \overline{y}_2^2 + \overline{y}_3^2\right) \\ &= 2\overline{z}^2 - \overline{w}^2.\end{align*}

Repeating this calculation and similar ones for $\sigma_2$ for all the manifolds in Table \ref{table:freelist}, we record the information in the following table.

\begin{table}[ht]

\caption{Calculation of $Bf^\ast$}\label{Bfcalc}

\begin{center}

\begin{tabular}{|c|c|c|}

\hline

Name & $(Bf_1^\ast - Bf_2^\ast)(\sigma_1)$ & $(Bf_1^\ast - Bf_2^\ast)(\sigma_2)$ \\

\hline

$M_1$  & $-\overline{z}^2-2\overline{w}^2 $ & $ -2\overline{z}^2\overline{w}^2 - \overline{w}^4 $ \\

$M_2$ & $-\overline{z}^2 - \overline{w}^2$  & $-\overline{z}^2\overline{w}^2$ \\

$M_3$ & $ -10\overline{z}^2-\overline{w}^2 $ & $ -9\overline{z}^4 - 10\overline{z}^2\overline{w}^2 $  \\

\hline

$N_1$ &  $2\overline{z}^2 - \overline{w}^2$ & $\overline{z}^4 $  \\

$N_2$ &  $ 3\overline{z}^2 - \overline{w}^2$ & $3\overline{z}^4$ \\

$N_3$ &  $ 3\overline{z}^2 - 2\overline{w}^2$ & $ 3\overline{z}^4 - \overline{w}^4$ \\

$N_4$ & $2\overline{z}^2-\overline{w}^2$ & $3\overline{z}^4 - \overline{z}^2\overline{w}^2$ \\

$N_5$ & $\overline{z}^2 + \overline{w}^2$ & $\overline{w}^4+2\overline{z}^2 \overline{w}^2$ \\

$N_6$ & $2\overline{z}^2 - \overline{w}^2$ & $\overline{z}^4+2\overline{z}^2\overline{w}^2 - \overline{w}^4$ \\

$N_7$ & $3\overline{z}^2-10\overline{w}^2$ & $3\overline{z}^4-9\overline{w}^4 $\\

$N_8$ & $  \overline{w}^2 - 11\overline{z}^2$ & $19\overline{z}^4$\\

$N_9$ & $10\overline{z}^2 - 11\overline{w}^2$ & $9\overline{z}^4-19\overline{w}^4$ \\

$N_{10}$ & $35\overline{z}^2 - \overline{w}^2$ & $259 \overline{z}^4$ \\

$N_{11}$ & $2\overline{z}^2 - 3\overline{w}^2$ & $-7\overline{z}^4 - 3\overline{w}^4$\\

$N_{12}$ & $-7\overline{z}^2-3\overline{w}^2$ & $-3\overline{w}^4-16\overline{z}^4$\\

$N_{13}$ & $\overline{z}^2 - 2\overline{w}^2$ & $-\overline{z}^4 - 2\overline{z}^2\overline{w}^2 - \overline{w}^4$\\

\hline

\end{tabular}

\end{center}

\end{table}

Now, in the spectral sequence associated to the fibration $G\rightarrow BG\rightarrow BG\times BG$, we have $x_3$ and $x_7$ totally transgressive with $dx_3 = \sigma_1 \otimes 1 -1\otimes \sigma_1$ and $dx_7 = \sigma_2\otimes 1-1\otimes \sigma_2$.  It follows by naturality that, in the spectral sequence for the fibration $G\rightarrow G\bq U\rightarrow BU$, we have $dx_3 = Bf^\ast(\sigma_1\otimes 1-1\otimes \sigma_1)$ and $dx_7 = Bf^\ast (\sigma_2\otimes 1-1\otimes \sigma_2)$.

By inspection of this spectral sequence, we see that $H^4(G\bq U)$ is isomorphic to $\mathbb{Z}^2/\langle dx_3\rangle$, where $\mathbb{Z}^2$ is generated by $\overline{z}^2$ and $\overline{w}^2$.  In particular, writing $dx_3 = \alpha\overline{z}^2 + \beta \overline{w}^2$, the homomorphism $\psi:\mathbb{Z}^2\rightarrow \mathbb{Z}$ with $\psi(s,t) = -\beta s + \alpha t$ is surjective with kernel $\langle dx_3\rangle$, so we see $H^4(G\bq U)$ is isomorphic to $\mathbb{Z}$.  By identifying the edge homomorphism with $\phi_U^\ast$, we also see that the induced map $\phi_U^\ast: H^4(BU)\rightarrow H^4(G\bq U)$ maps $\overline{z_i}^2$ to its image in $\mathbb{Z}^2/dx_3$.

Further inspection of the spectral sequence reveals that $H^8(G\bq U)$ is isomorphic to $\mathbb{Z}^3/X$, where $\mathbb{Z}^3$ is generated by $\overline{z}^4$, $\overline{w}^4$, and $\overline{z}^2\overline{w}^2$, and the subgroup $X$ is generated by $\overline{z}^2\, dx_3$, $\overline{w}^2\, dx_3$, and $dx_7$.

To understand $\mathbb{Z}^3/X$, we form a matrix $A_f$ whose rows are the coefficients of the elements $\overline{z}^2\, dx_3$, $\overline{w}^2\, dx_3$, and $dx_7$.  For example, for $N_6$, one has the matrix $\begin{bmatrix} 2 & 0 & -1\\ 0 & -1 & 2\\ 1 & -1 & 2\end{bmatrix}$ where the first column is the coefficient of $\overline{z}^4$, the second is the coefficient of $\overline{w}^4$, and the last the column is the coefficient of $\overline{z}^2\overline{w}^2$.  It is well known that the Smith normal form of this matrix completely determines the quotient $\mathbb{Z}^3/X$, and a direct computation shows the Smith normal form of any $A_f$ is of the form $\diag(1,1,\det(A_f))$.  It follows that $H^8(G\bq U) \cong \mathbb{Z}/ \det(A_f)$.  In particular, if $\det(A_f)\neq \pm \det(A_g)$, then the two biquotients associated to homomorphism $f$ and $g$ are homotopically distinct.  We record all of these determinants in Table \ref{table:det}.

\begin{table}[H]

\caption{The order of $H^8(G\bq U)$}\label{table:det}

\begin{center}

\begin{tabular}{|cccccccc|}

\hline

$M_1$ & $M_2$ & $M_3$ & $N_1$ & $N_2$ & $N_3$ & $N_4$ & $N_5$ \\ 

\hline

$3$ & $1$ & $91$ & $1$ & $3$ & $3$ & $1$ & $1$ \\

\hline

\hline

$N_6$ & $N_7$ & $N_8$ & $N_9$ & $N_{10}$ & $N_{11}$ & $N_{12}$ & $N_{13}$\\

\hline

$1$ & $219$ & $19$ & $811$ & $259$ & $75$ & $291$ & $9$\\

\hline

\end{tabular}

\end{center}

\end{table}

It follows from this that $M_2$, $N_1$, $N_4$, $N_5$, and $N_6$ could be diffeomorphic, and that $M_1$, $N_2$, and $N_3$ could be diffeomorphic, but all other pairs of biquotients $Sp(3)\bq Sp(1)^2$ are distinct up to homotopy.

\subsection{\texorpdfstring{Pontryagin classes of $Sp(3)\bq Sp(1)^2$}{Pontryagin classes of Sp(3)/Sp(1)Sp(1)}}
\label{examp1}

By computing the first Pontryagin class, we now show that all of the examples are distinct up to diffeomorphism.

We first handle the $M$ and $N$ cases.  By Theorem \ref{charclass}, we know that $$p(G\bq U) = \phi_G^\ast\big(\Pi_{\lambda \in \Delta^+_ G}(1+\lambda^2)\big)\phi_U^\ast\big(\Pi_{\rho\in\Delta^+_U}(1+\rho^2)\big)^{-1}$$ where $\Delta^+ G$ denotes the positive roots of $G$ and where $\phi_G^\ast$ and $\phi_U^\ast$ are the maps induced on cohomology.  For $G = Sp(3)$, the positive roots are $y_i\pm y_j$ for $1\leq i\leq j\leq 3$, and for $U = Sp(1)^2$, the positive roots are $2z$ and $2w$, so we have the formula \begin{align*} p_1(G\bq U) &= \phi_G^\ast\left(\sum_{1\leq i\leq j\leq 3} (\overline{y_i} \pm \overline{y_j})^2 \right) - \phi_U^\ast\left( 4\overline{z}^2 + 4\overline{w}^2 \right)\\ &= 8\phi_G^\ast\left(\overline{y_1}^2 + \overline{y_2}^2 + \overline{y_3}^2\right) - 4\phi_U^\ast(\overline{z}^2 + \overline{w}^2).\end{align*}

To compute $\phi_G^\ast(\overline{y_i})$, we note that $B\Delta^\ast( 1\otimes \overline{y_i}) = B\Delta^\ast (\overline{y_i}\otimes 1) = \overline{y_i}$.  Finally, commutativity of the diagram in Proposition \ref{commute} implies that $B\Delta \circ \Phi_G = \phi_U \circ B_f$, so $$\phi_G^\ast(\overline{y_i}) = \phi_U^\ast(B_f^\ast(1\otimes\overline{y_i})) = \phi_U^\ast(Bf^\ast(\overline{y_i}\otimes 1)).$$

For example, applying this to $N_6$, and using $1\otimes \overline{y_i}$, we have \begin{align*} p_1(N_6) &= 8\phi_G^\ast( \overline{y_1}^2 + \overline{y_2}^2 + \overline{y_3}^2) - 4\phi_U^\ast(\overline{z}^2 + \overline{w}^2) \\ &= \phi_U^\ast \big( 8 B_f^\ast( 1\otimes \overline{y_1}^2 + 1\otimes \overline{y_2}^2 + 1\otimes \overline{y_3}^2) - 4(\overline{z}^2 + \overline{w}^2)\big)\\ &=  \phi_U^\ast \big( 8(\overline{w}^2 + \overline{w}^2 + 0) - 4(\overline{z}^2 + \overline{w}^2)\big) \\ &= \phi_U^\ast(12 \overline{w}^2 -4 \overline{z}^2).\end{align*}

Using the map $\psi:\mathbb{Z}^2\rightarrow \mathbb{Z}$ from the previous subsection, we identify $\phi_U^\ast(12 \overline{w}^2 - 4 \overline{z}^2) = 1 \cdot -4 + 2\cdot 12 = 20\in \mathbb{Z} = H^4(G\bq U)$.

The results of similar calculations are given in Table \ref{table:Pontryagin}.

\begin{table}[H]

\caption{First Pontryagin class of the remaining examples.}\label{table:Pontryagin}

\begin{center}

\begin{tabular}{|c|ccccc|ccc|}

\hline

Manifold & $M_2$ & $N_1$ & $N_4$ & $N_5$ & $N_6$ & $M_1$ & $N_2$ & $N_3$ \\

$\pm p_1 \in \mathbb{Z}$ & $0$ & $4$ & $12$ & $8$ & $20$ & $4$ & $8$ & $28$\\

\hline

\end{tabular}

\end{center}

\end{table}

In particular, we see that all of these examples break into pairwise distinct diffeomorphism types.  Further, since $p_1$, taken mod $24$, is a homotopy invariant \cite{AH}, they are distinct up to homotopy except possibly for the two pairs $(N_1,N_6)$ and $(M_1,N_3)$.

\

We now distinguish the $O$ examples and $M_4$.  As mentioned at the beginning of section 5, these spaces all have second homotopy group isomorphic to $\mathbb{Z}/2\mathbb{Z}$, and are thus homotopically distinct from the $N$ family and other $M$ manifolds.  Because $H^\ast(SO(3))$ has torsion, we cannot directly use Theorem \ref{toruscomp} to compute $p_1(O_1)$ and $p_1(O_2)$.  However, by investigating the spectral sequence associated to the fibration $S^2\rightarrow BT^1\rightarrow BSO(3)$ induced from the inclusion $T^1\subseteq SO(3)$, it is easy to see that the induced map $H^4(BSO(3))\rightarrow H^4(BT^1)$ is an isomorphism.  Thus, the conclusion of Theorem \ref{toruscomp} still holds, at least in dimension 4, and this is enough to allow us to compute $p_1$.  The only other change is to note that for $SO(3)$, the root is not $2\overline{w}$, but rather $\overline{w}$.  Then, going through a similar calculation, we find $p_1(M_4) = -5$, $p_1(O_1) = 37$, and $p_1(O_2) = 7$.

\

Finally, we note that previously the only known $15$-dimensional examples with quasi-positive curvature were $M_1 = T^1\mathbb{H}P^2$, $M_2$, and $M_3$ as shown by Kerr and Tapp \cite{Ta1,KT}, $T^1 S^8$ as shown by Wilking \cite{Wi}, and an infinite family of biquotients of the form $U(5)\bq (S^1\times U(3))$ as shown by Kerin, Wilking, and Tapp \cite{Ke2,Wi,Ta1}.

Note that all of our new examples of quasi-positively curved manifolds are $3$-connected with $H^4$ isomorphic to $\mathbb{Z}$.  But $T^1 S^8$ is $6$-connected, and the $U(5)$ biquotients all have $H^2 = \mathbb{Z}$.

\bibliographystyle{plain}
\bibliography{bibliography}

\begin{thebibliography}{10}

\bibitem{AH}
M.~F. Atiyah and F.~Hirzebruch.
\newblock Riemann-{R}och theorems for differentiable manifolds.
\newblock {\em Bull. Amer. Math. Soc.}, 65:276--281, 1959.

\bibitem{Ber}
M.~Berger.
\newblock Les vari\'{e}t\'{e}s {R}iemanniennes homog\'{e}nes normales
  simplement connexes \'{a} courbure strictement positive.
\newblock {\em Ann. Scuola Norm. Sup. Pisa}, 15:179--246, 1961.

\bibitem{BH1}
A.~Borel and F.~Hirzebruch.
\newblock Characteristic classes and homogeneous spaces i.
\newblock {\em Amer. J. of Math.}, 80:458--538, 1958.

\bibitem{Ch1}
J.~Cheeger.
\newblock Some example of manifolds of nonnegative curvature.
\newblock {\em J. Diff. Geo.}, 8:623--625, 1973.

\bibitem{D1}
J.~DeVito.
\newblock The classification of simply connected biquotients of dimension 7 or
  less and 3 new examples of almost positively curved manifolds.
\newblock {\em Thesis, University of Pennsylvania}, 2011.

\bibitem{Es2}
J.~Eschenburg.
\newblock Freie isometrische aktionen auf kompakten {L}ie-gruppen mit positiv
  gekr$\ddot{\text{u}}$mmten orbitr$\ddot{\text{a}}$umen.
\newblock {\em Schriften der Math. Universit$\ddot{\text{a}}$t
  M$\ddot{\text{u}}$nster}, 32, 1984.

\bibitem{Es3}
J.~Eschenburg.
\newblock Cohomology of biquotients.
\newblock {\em Manu. Math.}, 75:151--166, 1992.

\bibitem{EK}
J.~Eschenburg and M.~Kerin.
\newblock Almost positive curvature on the gromoll-meyer 7-sphere.
\newblock {\em Proc. Amer. Math. Soc.}, 136:3263--3270, 2008.

\bibitem{FH}
W.~Fulton and J.~Harris.
\newblock {\em Representation Theory A First Course}.
\newblock Springer, 2004.

\bibitem{GrMe1}
D.~Gromoll and W.~Meyer.
\newblock An exotic sphere with nonnegative sectional curvature.
\newblock {\em Ann. Math.}, 100:401--406, 1974.

\bibitem{Ke2}
M.~Kerin.
\newblock Some new examples with almost positive curvature.
\newblock {\em Geometry and Topology}, 15:217--260, 2011.

\bibitem{Ke1}
M.~Kerin.
\newblock On the curvature of biquotients.
\newblock {\em Math. Ann.}, 352:155--178, 2012.

\bibitem{KT}
M.~Kerr and K.~Tapp.
\newblock A note on quasi-positive curvature conditions.
\newblock {\em Diff. Geo. and Appl.}, 34:63--79, 2014.

\bibitem{Ma}
Mal'cev.
\newblock On semisimple subgroups of {L}ie groups.
\newblock {\em Amer. Math. Soc. Translations}, 1:172--273, 1950.

\bibitem{On1}
B.~O'Neill.
\newblock The fundamental equations of a submersion.
\newblock {\em Michigan Math. J.}, 13:459--469, 1966.

\bibitem{PW2}
P.~Petersen and F.~Wilhelm.
\newblock Examples of {R}iemannian manifolds with positive curvature almost
  everywhere.
\newblock {\em Geom. and Top.}, 3:331--367, 1999.

\bibitem{Si1}
W.~Singhof.
\newblock On the topology of double coset manifolds.
\newblock {\em Math. Ann.}, 297:133--146, 1993.

\bibitem{Ta1}
K.~Tapp.
\newblock Quasi-positive curvature on homogeneous bundles.
\newblock {\em J. Diff. Geo.}, 65:273--287, 2003.

\bibitem{W}
F.~Wilhelm.
\newblock An exotic sphere with positive curvature almost everywhere.
\newblock {\em J. Geom. Anal.}, 11:519--560, 2001.

\bibitem{Wi}
B.~Wilking.
\newblock Manifolds with positive sectional curvature almost everywhere.
\newblock {\em Invent. Math.}, 148:117--141, 2002.

\end{thebibliography}

\end{document}